 	\definecolor{darkred}{rgb}{0.5,0,0}
 	\definecolor{darkgreen}{rgb}{0,0.5,0}
 	\definecolor{darkblue}{rgb}{0,0,0.5}
\newcommand{\drm}{\mathrm{d}}
\newcommand{\euler}{\mathrm{e}}
\newcommand{\Z}{\mathbb{Z}}
\newcommand{\N}{\mathbb{N}}
\newcommand{\R}{\mathbb{R}}
\newcommand{\EE}{\mathbb{E}}
\newcommand{\PP}{\mathbb{P}}
\newcommand{\C}{\mathbb{C}}
\newcommand{\1}{\mathbf{1}}
\newcommand{\from}{\colon}
\newcommand{\norm}[1]{\left\lVert#1\right\rVert}
\newcommand{\abs}[1]{\left\lvert#1\right\rvert}
\newcommand{\Deg}{\operatorname{Deg}}
\newcommand{\Dmax}{D_{\mathrm{max}}}
\newcommand{\id}{\operatorname{Id}}
\DeclareMathOperator{\Covr}{Covr}
\DeclareMathOperator{\Inr}{Inr}
\DeclareMathOperator{\vol}{vol}
\newcommand{\comb}{\mathrm{comb}}
\newcommand{\Le}{{L}}
\renewcommand{\epsilon}{\varepsilon}
\newtheorem{theorem}{Theorem}[section]
\newtheorem{proposition}[theorem]{Proposition}
\newtheorem{lemma}[theorem]{Lemma}
\newtheorem{corollary}[theorem]{Corollary}
\theoremstyle{definition}
\newtheorem{definition}[theorem]{Definition}
\theoremstyle{remark}
\newtheorem{remark}[theorem]{Remark}
\title{On controllability, observability and stabilizability of the heat equation on discrete graphs}
\author[1]{Florentin M\"unch}
\affil[1]{Universit\"at Leipzig, Mathematisches Institut,
Augustusplatz 10,
04109 Leipzig, Germany
}
\author[2,3]{Christian Seifert}
\affil[2]{Technische Universit\"at Hamburg, Institut f\"ur Mathematik, 
Am Schwarzenberg-Campus 3, 21073 Hamburg, 
Germany
}
\affil[3]{University of the Free State,
Mathematics and Applied Mathematics,
Bloemfontein 9300, Republic of South Africa
} 
\author[4]{Peter Stollmann}
\affil[4]{Technische Universit\"at Chemnitz, Fakult\"at f\"ur Mathematik, 
Reichenhainer Stra{\ss}e 41, 09126 Chemnitz, 
Germany
}
\author[1]{Martin Tautenhahn}
\date{\vspace{-7ex}}
\begin{document}
\maketitle
\begin{abstract}
We consider linear control problems for the heat equation of the form $\dot f (t) = -Hf (t) + \1_D u (t)$, $f (0) \in \ell_2 (X,m)$, where $H$ is the weighted Laplacian on a discrete graph $(X,b,m)$, and where $D \subseteq X$ is relatively dense.
We show cost-uniform $\alpha$-controllability by means of a weak observability estimate for the corresponding dual observation problem. 
We discuss optimality of our result as well as
consequences on stabilizability properties. 
\\[1ex]
\textsf{\textbf{Mathematics Subject Classification (2020).}} 93B05, 93B07, 05C63, 35K05, 81S07
\\[1ex]
\textbf{\textsf{Keywords.}} controllability, observability, stabilizability, weighted graph, Laplacian, uncertainty relation 
\end{abstract}
\tableofcontents
\section{Introduction}
In recent years, discrete graphs have been studied extensively from various perspectives, e.g.\ from their geometry, as ground spaces for partial differential equations, as state spaces for stochastic processes, as well as spatial spaces for applications in mathematical physics, see \cite{KellerL-12,KellerLW-21,LenzSS-20} and references therein for an overview on the topic. Moreover, discrete graph models also arise in discretizations of continuum models, and their interplay was studied e.g.\ in \cite{PostS-21, KostenkoN-22, KostenkoN-23}. 

In this note, we add a further study to this line of research, in particular, we consider linear time-invariant control systems as well as observation systems for the heat equation on discrete graphs and study controllability, observability, as well as stabilizabilty properties. Here, the time evolution of the heat equation is driven by the weighted Laplacian $H$ which is the generator of the corresponding solution operator semigroup. More precisely, for a given weighted graph $(X,b,m)$ let $H$ be the weighted Laplacian in $\ell_2(X,m)$ given by
\[Hf(x) := \frac{1}{m(x)} \sum_{y\in X} b(x,y) \bigl(f(x)-f(y)\bigr),\]
which is a non-negative self-adjoint operator.
\par
For $D\subseteq X$ we consider the linear control problem
\[\dot{f}(t) = -H f(t) + \1_D u(t), \quad t>0,\quad f(0)=f_0\in \ell_2(X,m),\]
where $\1_D$ is the canonical embedding and $u$ attains values in $\ell_2(D,m|_D)$. We aim to study cost-uniform $\alpha$-controllability in time $T > 0$ with $\alpha \geq 0$, which means that for all inital states $f_0 \in\ell_2(X,m)$ we can find a \emph{control function} $u \in L_r ((0,T); \ell_2(D,m|_D))$, $r \in [1,\infty]$, such that 
\begin{equation} \label{eq:control-intro}
 \lVert u \rVert_{L_r((0,T);\ell_2 (D,m|_D))} \le K \lVert f_0 \rVert_{\ell_2 (X,m)} \quad \text{and}\quad \lVert f(T) \rVert_{\ell_2 (X,m)} \leq \alpha \lVert f_0 \rVert_{\ell_2 (X,m)}.
\end{equation}
That is, the norm of the solution at time $T$ is at most an $\alpha$-portion of the norm of $f_0$. The constant $K$ is interpreted as the \emph{control cost}. Of course, given $T>0$ one aims to prove \eqref{eq:control-intro} with $\alpha$ and $K$ as small as possible, optimally with $\alpha = 0$.

In order to prove $\alpha$-controllability we make use of the dual observation problem
\begin{align*}
    \dot{\varphi}(t) & = -H \varphi(t), \quad t>0,\quad \varphi(0)=\varphi_0 \in \ell_2(X,m),\\
    \psi(t) & = \varphi(t)|_D,\quad t\geq 0.
\end{align*}
and prove a so-called \emph{weak observability estimate} for that observation problem.
A classical method to study observability is given by the Lebeau--Robbiano method \cite{LebeauR-95, Miller-10}, where an uncertainty principle (also called a spectral inequality and strongly related to unique continuation) together with a dissipation estimate yields a final-time observability estimate for an observation system, and thus by means of duality, also controllability for the corresponding control problem. This strategy has been applied successfully for various model, see e.g.\ \cite{LebeauZ-98, TenenbaumT-11, WangZ-17, BeauchardP-18, NakicTTV-20, GallaunST-20, BombachGST-23, TrelatWX-20, BeauchardJPS-21, AlphonseM-22, LiuWXY-22, BombachGST-23a, KruseS-23, GallaunMS-23, EgidiGST-24}.
While the dissipation estimate is an easy consequence of spectral theory in the case of non-negative self-adjoint generators $H$, the uncertainty principle is more involved. Fortunately, such uncertainty principles, also in a quantitative version, have already been investigated for relatively dense sets $D$ in the range of small energies, see \cite{LenzSS-20}. We will make use of these to show a weak obervability estimate for final time $T>0$; that is, we show that we can bound the norm of $\varphi(T)$ by an $L_r$-norm of $\psi$ and a portion of the norm of $\varphi_0$. In turn, we conclude cost-uniform $\alpha$-controllability for given $\alpha>0$ and sufficiently large final times $T>0$ by duality. Let us note that the standard method proving observability from an uncertainty principle does not apply as it assumes uncertainty at all energies. On discrete graphs uncertainty principles are (currently) available only at small energies.
\par
Moreover, we provide concrete examples for which $0$-controllability fails, as well as a method to generate such examples. This, in turn, also suggests that one cannot expect that the quantitative uncertainty principles for the weighted Laplacian in \cite{LenzSS-20} can be extended in their present form to all energies. This observation aligns well to the fact that unique continuation for solutions of the heat equation fails on discrete graphs. Let us note that this complements the situation in the continuum, where $0$-controllability can be shown \cite{BeauchardP-18,EgidiV-18,NakicTTV-20,GallaunST-20,BombachGST-23}. In fact, in the continuum case, \cite{EgidiV-18} (see also \cite{BombachGST-23a}) has shown that $0$-controllability is equivlaent to so-called thickness of the control set $D$. Thus, the discrete situation is different to the continuum case: in general, we cannot obtain $0$-controllability in the discrete case.

We also comment on the necessity of the relative denseness of $D$ for $\alpha$-controllability, which, again, can be seen as a discrete counterpart to the necessary conditions in \cite{EgidiV-18, BombachGST-23a}.

Lastly, we derive stabilizability for the control problem on discrete graphs, in a standard manner, from $\alpha$-controllabilty for $\alpha<1$.

The paper is organised as follows. In Section \ref{sec:weighted_graphs} we introduce the relevant notation on weighted graphs and the weighted Laplacian, as well as covering graphs. We formulate our main results in Section \ref{sec:main_results}, split into those on controllability and weak observability, non-$0$-controllability, necessary conditions for controllability, and consequences for exponential stabilization. Section \ref{sec:proof_obs_control} is devoted to the proofs of the main results.

\section{Weighted graphs and Laplacians}
\label{sec:weighted_graphs}
In this section, we introduce the notation on weighted graphs and Laplacians on graphs.

\paragraph{Graph theoretic notation}
Let $X$ be a countable set and $m \from X \to (0,\infty)$ a weight function, which induces a measure on $X$, also denoted by $m$, via $m(A):=\sum_{x\in A}m(x)$ for all $A\subseteq X$. We now choose a symmetric weight function $b\from X\times X\to [0,\infty)$ such that $b(x,x) = 0$ for all $x\in X$ and $$\sum_{y\in X} b(x,y) < \infty$$ for all $x\in X$. 
We then say that $(X,b,m)$ is a \emph{weighted graph}. We refer, e.g., to the survey \cite{Keller-15} or the textbook \cite{KellerLW-21} and the references therein for more details on weighted graphs. 
A \emph{path} in $X$ is a finite sequence $\gamma = (x_0,x_1,\ldots,x_k)$ with $x_j\in X$ such that $b(x_j,x_{j+1})>0$ for all $j\in\{0,\ldots,k-1\}$; specifying the endpoints, we say that $\gamma$ is a \emph{path from $x_0$ to $x_k$}.
We say that the weighted graph $(X,b,m)$ is \emph{connected} if for all $x,y\in X$ with $x\neq y$ there exists a path from $x$ to $y$. Throughout the paper we make the following assumptions:
\begin{enumerate}[(A)]
\setcounter{enumi}{2}
 \item \label{eq:C} $(X,b,m)$ is connected,
 \setcounter{enumi}{1}
 \item \label{eq:B} $\Dmax := \displaystyle{\sup_{x\in X} \Deg (x)<\infty}$, where $\displaystyle{\Deg(x) := \frac{1}{m(x)} \sum_{y\in X} b(x,y)}$,
  \setcounter{enumi}{12}
 \item  \label{eq:M}$\displaystyle{\sup_{x\in X} m(x) <\infty}$.
\end{enumerate}

\paragraph{Metrics and related notions on graphs}

The first metric we consider is the \emph{combinatorial metric} $d_{\comb}\from X\times X\to [0,\infty)$ which is defined as
\[d_{\comb}(x,y) := \begin{cases}
                           0 & x=y,\\
                           \min\{n\in\N:\; \text{$\exists$ path $\gamma=(x_0,x_1,\ldots,x_n)$ with $x_0 = x$ and $x_n=y$}\} & x\neq y.
                          \end{cases}\]
Clearly, $d_{\comb}$ is a metric and measures the distance of two points by means of the combinatorial graph neglecting the particular values of the edge weights describes by $b$.

The second metric we consider is the so-called length metric.
For a path $\gamma= (x_0,x_1,\ldots,\allowbreak x_k)$ in $X$ we define its \emph{length} as
    \[L(\gamma):= \sum_{j=0}^{k-1} \frac{1}{b(x_j,x_{j+1})}.\]
We define the \emph{length distance} $d_{\Le}\from X\times X\to [0,\infty)$ by
    \[d_{\Le}(x,y):=\begin{cases} 0 & x=y,\\
               \inf\{L(\gamma):\; \gamma \text{ path from $x$ to $y$}\} & x\neq y.
              \end{cases}\]
Note that $d_{\Le}$ is finite by \eqref{eq:C}, and by \eqref{eq:B} and \eqref{eq:M} we have 
\[
\sup_{x,y\in X} b(x,y) \leq \sup_{x\in X} \left( m(x) \Deg (x) \right) \leq \left(\sup_{x\in X} m(x) \right) \left(\sup_{x \in X} \Deg (x) \right) < \infty .
\]
Hence, $b$ is bounded, and therefore 
\[
d_{\Le}(x,y) \geq \frac{1}{\sup_{x,y\in X} b(x,y)}>0 
\]
for all $x,y\in X$ with $x\neq y$, so $d_{\Le}$ is a metric. 


By \eqref{eq:B} and \eqref{eq:M}, for a path $\gamma=(x_0,x_1,\ldots,x_k)$ we have
$L(\gamma) \geq k / \sup_{x,y\in X} b(x,y)$.
This implies
\[d_{\comb} \leq \sup_{x,y\in X} b(x,y) d_{\Le}.\]
If aditionally $\inf_{x,y\in X, b(x,y)>0} b(x,y)>0$, then we also obtain
\[d_{\Le} \leq \frac{1}{\inf_{x,y\in X, b(x,y)>0} b(x,y)} d_{\comb}\]
such that $d_{\Le}$ and $d_{\comb}$ are equivalent.
\par
Let $d$ be a metric on $X$.
For $x\in X$ and $r\geq 0$ we denote the open and closed ball of radius $r$ around $x$ by $U_r(x):=U_r^d(x):=\{y\in X:\; d(x,y)<r\}$ and $B_r(x):=B_r^d(x):=\{y\in X:\; d(x,y)\leq r\}$, respectively. For $D\subseteq X$ we define the \emph{covering radius} of $D$ by
    \[\Covr(D):=\Covr^d(D) := \inf\Bigl\{R>0:\; \bigcup_{x\in D} B_R(x) = X\Bigr\}\in [0,\infty]\]
    and say that $D$ is \emph{$d$-relatively dense} if $\Covr(D)<\infty$. 
    Moreover, for $\Omega\subseteq X$ we define the \emph{inradius} by
    \[\Inr(\Omega):=\Inr^d(\Omega) := \sup\{r>0:\; \text{there exists $x\in \Omega$ such that $U_r(x)\subseteq \Omega$}\}\in [0,\infty].\]
Finally, for $r\geq 0$ we define the \emph{maximum volume of balls} of radius $r$ by
    \[\vol(r):=\vol^d(r):=\sup_{x\in X} m(B_r(x))<\infty.\] 
Usually, we will supress the superscript $d$, whenever it is clear from the context which of the two metrices $d_\Le$ or $d_\comb$ is considered.
Note that \eqref{eq:B} and \eqref{eq:M} imply that balls of fixed finite radius with respect to $d_\Le$ have uniformly bounded measure; cf.\ \cite[Proposition~2.1]{LenzSS-20}.
\paragraph{Laplace operators on graphs}
Consider the Hilbert space $$\ell_2 (X,m) := \{ f : X \to \C \colon \sum_{x \in X} \lvert f (x) \rvert^2 m (x) < \infty \}$$ equiped with the inner product
\[
 \langle f,g \rangle := \sum_{x \in X} f(x) \overline{g(x)} m(x) .
\]
Let $H$ be the bounded self-adjoint operator $H\colon \ell_2(X,m)\to \ell_2(X,m)$, known as the \emph{weighted Laplacian}, given by
\[Hf(x) := \frac{1}{m(x)} \sum_{y\in X} b(x,y) \bigl(f(x)-f(y)\bigr).\]
Note that $H$ is bounded by \eqref{eq:B}.
We denote by $(S_t)_{t\geq 0}$ the $C_0$-semigroup generated by $-H$, i.e.\ $S_t:=\euler^{-tH}$ for $t\geq 0$. Note that $(S_t)_{t\geq 0}$ is contractive as $H$ is self-adjoint and non-negative.

\paragraph{Covering graphs}

\begin{definition}
Let $(X_1,b_1,m_1)$ and $(X_2, b_2, m_2)$ be two weighted graphs. Then $(X_2, b_2, m_2)$ is a \emph{covering graph} of $(X_1,b_1,m_1)$ if $(X_2,b_2,m_2)$ is connected and there exists a surjective map $p: X_2\to X_1$ such that for all $x_2 , y_2 \in X_2$ we have
\begin{itemize}
\item $m_1(p(x_2)) = m_2(x_2)$,
\item $b_2(x_2,y_2) = b_1(p(x_2),p(y_2))$,
\item and that $p$ maps the set of neighbors $\{y_2\in X_2:\; b_2(x_2,y_2)>0\}$ of $x_2$ bijectively onto the set of neighbors $\{y_1\in X_1:\; b_1(p(x_2),y_1)>0\}$ of $p(x_2)$.
\end{itemize}
\end{definition}

\begin{remark}
    If $(X_1,b_1,m_1)$ is a finite graph and $(X_2, b_2, m_2)$ a covering graph of $(X_1,b_1,m_1)$ then $(X_2, b_2, m_2)$ has bounded geometry, i.e.\ $\infty > \sup m_2 \geq \inf m_2>0$, and $\infty > \sup b_2 \geq \inf (\{ b_2(x_2, y_2):\; x_2,y_2\in X_2\}\setminus\{0\}) >0$, and the number of neighbors of each vertex in $X_2$ is uniformly bounded. In particular, $(X_2, b_2, m_2)$ satisfies assumptions \eqref{eq:B} and \eqref{eq:M}, and clearly also \eqref{eq:C}.
\end{remark}

\begin{definition}
A weighted graph $(X,b,m)$ is called \emph{amenable} if there exists a sequence $(Y_n)_{n\in\N}$ of finite subsets of $X$ such that
$Y_n \subseteq Y_{n+1}$ for all $n \in \N$ and $\bigcup_{n\in \N} Y_n = X$, and
\[
\frac{b(Y_n,Y_n^c)}{m(Y_n)} \to 0 \mbox{ as }n \to \infty,
\]
where $b(Y_n,Y_n^c):=\sum_{x\in Y_n,y\in Y_n^c} b(x,y)$.
\end{definition}
Note that amenability is equivalent to $\inf \sigma(H)=0$; cf.\ \cite[Theorem 13.2 and Exercise 13.2]{KellerLW-21}.

\section{Main results}
\label{sec:main_results}

In this section we state our main results.

\subsection{Controllability and weak observability}

Let $D \subseteq X$ and $\1_D : \ell_2 (D,m|_D) \to \ell_2 (X,m)$ be the canonical embedding defined by $(\1_D u)(x) := u(x)$ if $x \in D$ and $(\1_D u)(x) := 0$ if $x \not\in D$. We consider the linear control problem $(H,D)$ given by
\begin{equation} \label{eq:system}
 \dot f (t) = -Hf (t) + \1_D u (t), \quad t > 0,  \quad f (0) = f_0 \in \ell_2 (X,m) ,\tag{$H,D$}
\end{equation}
where $u \in L_r ((0,\infty);\ell^2 (D,m|_D))$ for some $r \in [1,\infty]$ is the so-called \emph{control function}. Thus, control function $u$ influences the equation only via the subset $D$ of $X$. The mild solution to \eqref{eq:system} is for $t\geq 0$ given by Duhamel's formula
\[
 f (t) = S_t f_0 + \int_0^t S_{t-\tau} \1_D u (\tau) \drm \tau .
\]
We are mainly interested in cost-uniformly $\alpha$-controllability and its equivalent properties, see e.g.\ \cite{TrelatWX-20} for the case of Hilbert spaces, or \cite{EgidiGST-24} for the general framework of Banach spaces.
\begin{definition} \label{def:controllability}
Let $T > 0$, $\alpha,K \geq 0$, and $r \in [1,\infty]$.
 The linear control problem \eqref{eq:system} is called \emph{cost-uniformly $\alpha$-controllable in time $T$ with respect to $L_r$ and cost $K$} if for all $f_0\in \ell_2 (X,m)$ there exists $u \in L_r((0,T);\ell_2 (D,m|_D))$ such that 
	$$ \lVert u \rVert_{L_r((0,T);\ell_2 (D,m|_D))} \le K \lVert f_0 \rVert_{\ell_2 (X,m)} \quad \text{and}\quad \lVert f(T) \rVert_{\ell_2 (X,m)} \leq \alpha \lVert f_0 \rVert_{\ell_2 (X,m)}.$$
We will use the short hand notation $(\alpha,T,r,K)$-controllable for this property.
Note that we consider $L_r((0,T);\ell_2 (D,m|_D))$ as subspace of $L_r((0,\infty);\ell_2 (D,m|_D))$ via extension by zero.
\end{definition}

\begin{theorem}
\label{thm:main}
    Let $D\varsubsetneq X$ be $d_{\Le}$-relatively dense. Let $\alpha > 0$, $r \in [1,\infty]$.
    Then there exist $T>0$ and $K\geq 0$ such that the linear control problem \eqref{eq:system} is $(\alpha,T,r,K)$-controllable.
\end{theorem}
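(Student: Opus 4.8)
The plan is to use duality: cost-uniform $\alpha$-controllability of \eqref{eq:system} in time $T$ with respect to $L_r$ is equivalent to a so-called weak (final-state) observability estimate for the dual observation problem $\dot\varphi=-H\varphi$, $\psi=\varphi|_D$, namely
\[
 \norm{\varphi(T)}_{\ell_2(X,m)}^2 \le C_{\mathrm{obs}}^2 \norm{\psi}_{L_{r'}((0,T);\ell_2(D,m|_D))}^2 + \alpha^2 \norm{\varphi_0}_{\ell_2(X,m)}^2,
\]
where $r'$ is the conjugate exponent; this equivalence is the abstract Hilbert/Banach-space functional-analytic fact recorded in \cite{TrelatWX-20, EgidiGST-24}, and the control cost $K$ is controlled by $C_{\mathrm{obs}}$. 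So it suffices to establish such a weak observability estimate for all sufficiently large $T$, with $\alpha$ as prescribed. Since $\varphi(T)=S_T\varphi_0$ and $(S_t)_{t\ge0}$ is contractive, it is natural to run a one-shot (single spectral band) Lebeau--Robbiano argument rather than the full iterated scheme.

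The two ingredients are: (i) a \emph{dissipation estimate}, and (ii) an \emph{uncertainty principle at low energies}. For (i): let $P_\lambda := \1_{[0,\lambda]}(H)$ be the spectral projection of $H$ and $Q_\lambda := \id - P_\lambda$. By spectral calculus, $\norm{S_t Q_\lambda}_{} \le \euler^{-\lambda t}$, so the high-energy part of $\varphi_0$ decays: $\norm{S_T Q_\lambda \varphi_0} \le \euler^{-\lambda T}\norm{\varphi_0}$. Choosing $\lambda$ first, then $T$ large enough that $\euler^{-\lambda T} \le \alpha$, handles the high-energy contribution to $\norm{\varphi(T)}$ up to the tolerated $\alpha$-portion. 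For (ii): the quantitative uncertainty principle for the weighted Laplacian on graphs with relatively dense $D$, as in \cite[LenzSS-20]{LenzSS-20}, gives a constant $c=c(\lambda, \Covr^{d_\Le}(D), \Dmax, \sup m)>0$ such that
\[
 \norm{g}_{\ell_2(X,m)}^2 \le c\, \norm{g|_D}_{\ell_2(D,m|_D)}^2 \quad\text{for all } g \in \operatorname{ran} P_\lambda.
\]
Applying this to $g = S_T P_\lambda \varphi_0$ (which again lies in $\operatorname{ran}P_\lambda$ since $H$ and $P_\lambda$ commute with $S_T$) bounds the low-energy part of $\varphi(T)$ by its restriction to $D$, i.e.\ by $\norm{(S_t P_\lambda \varphi_0)|_D}$ at time $t=T$; splitting $\varphi(T)=S_TP_\lambda\varphi_0 + S_TQ_\lambda\varphi_0$, using the triangle inequality and the two estimates, then converting the pointwise-in-time $\ell_2(D)$-bound into an $L_{r'}((0,T);\ell_2(D,m|_D))$-bound of $\psi=\varphi|_D$ (using contractivity of $S_t$ and Hölder on the finite interval, at the cost of a constant depending on $T$ and $r'$) yields the desired weak observability estimate. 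Translating back through duality gives $(\alpha, T, r, K)$-controllability.

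The main obstacle is the low-energy restriction: the uncertainty principle of \cite{LenzSS-20} is available only for energies below a threshold, so one must verify that this threshold is strictly positive under the standing assumptions \eqref{eq:C}, \eqref{eq:B}, \eqref{eq:M} and that $\lambda$ can be chosen in the admissible range while still allowing $T$ to be taken large — this is precisely why one gets $\alpha>0$ rather than $\alpha=0$, and why the argument cannot be iterated to all energies. A secondary technical point is bookkeeping the dependence of $c$ on $\lambda$ and on the covering radius $\Covr^{d_\Le}(D)$ (finite by the relative denseness hypothesis), and checking that $\operatorname{ran}P_\lambda$ is $S_T$-invariant, which is immediate from the functional calculus. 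The passage from a one-time-slice estimate to the $L_{r'}$-in-time estimate of $\psi$ is routine but must be done carefully to extract the cost $K$; everything else is standard semigroup theory.
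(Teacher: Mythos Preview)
Your proposal is correct and follows essentially the same route as the paper's proof: duality (Theorem~\ref{thm:duality}), the low-energy uncertainty principle of Proposition~\ref{prop:UP}, the spectral dissipation bound $\norm{Q_\lambda S_t}\le\euler^{-\lambda t}$, and a single spectral splitting followed by time-averaging and H\"older. The paper makes explicit two small steps you gloss over: one must pass from $\norm{(P_\lambda S_\tau\varphi_0)|_D}$ to $\norm{(S_\tau\varphi_0)|_D}$ via the triangle inequality (picking up an extra $\euler^{-\lambda\tau}\norm{\varphi_0}$, which is why the final remainder constant is $\kappa+1$ rather than $1$), and the averaging is carried out only over $\tau\in[\delta T,T]$ so that the dissipation term stays uniformly below the prescribed $\alpha$.
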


Theorem \ref{thm:main} is a consequence of the following theorem on weak observability and duality; for details see Section \ref{sec:proof_obs_control}.

\begin{theorem}
\label{thm:weak_obs}
    Assume that $D\varsubsetneq X$ is $d_{\Le}$-relatively dense and $\Omega:=X\setminus D$.
    Let $T>0$, $\delta\in [0,1)$ and $r\in [1,\infty]$. Then for all $\varphi_0\in \ell^2(X,m)$ we have
    \[
    \lVert S_T\varphi_0 \rVert 
    \leq 
    K \lVert (S_{(\cdot)} \varphi_0)|_D \rVert_{L_{r}((\delta T,T);\ell^2(D,m|_D))} + \alpha \norm{\varphi_0},
    \]
    where $2\lambda:= (\Inr(\Omega)\vol(\Inr(\Omega)))^{-1}$, $\kappa:=8 \norm{H+1}^2 \Inr(\Omega)\vol(\Inr(\Omega))$,
    \[
     K := \frac{\kappa}{((1-\delta) T)^{1/r}} , \quad\text{and}\quad
     \alpha := (\kappa+1)\euler^{-\delta\lambda T} .
    \]
\end{theorem}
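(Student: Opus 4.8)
The plan is to carry out a Lebeau--Robbiano-type argument \cite{LebeauR-95,Miller-10} in which the uncertainty principle is replaced by the \emph{low-energy} uncertainty principle available on discrete graphs. First I would control the low-energy part of $\varphi_0$ by the observation via the quantitative uncertainty principle of \cite{LenzSS-20}, then control the high-energy part by the dissipation estimate coming from self-adjointness and non-negativity of $H$, and finally convert the resulting pointwise-in-time bound into the claimed $L_r$-estimate over the window $(\delta T,T)$.

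Concretely, set $P:=\1_{[0,\lambda]}(H)$ and $Q:=\id-P$, with $\lambda$ as in the statement; note $\lambda\in(0,\infty)$ since $\Omega=X\setminus D$ is nonempty with finite inradius, and balls of finite $d_{\Le}$-radius have finite measure by \eqref{eq:B} and \eqref{eq:M}. The quantitative uncertainty principle of \cite{LenzSS-20}, applied to the $d_{\Le}$-relatively dense set $D$, provides a spectral inequality at low energies: there is $C>0$, depending only on $\Inr(\Omega)$, $\vol(\Inr(\Omega))$ and $\norm{H+1}$, with
\[
\norm{\varphi}\le C\,\norm{\varphi|_D}\qquad\text{for all }\varphi\in\operatorname{ran}P .
\]
A careful bookkeeping of the constant of \cite{LenzSS-20} then shows that one may take $C\le\kappa$; if that estimate is phrased as an energy--concentration inequality for arbitrary $\varphi\in\ell^2(X,m)$ rather than directly as a spectral inequality on $\operatorname{ran}P$, one first specializes it, the case of $\lambda$ being an eigenvalue of $H$ causing no trouble provided the intervals defining $P$ and $Q$ are chosen consistently. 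On the other hand, the spectral theorem yields the dissipation estimate $\norm{S_tQ\varphi_0}\le\euler^{-\lambda t}\norm{Q\varphi_0}\le\euler^{-\lambda t}\norm{\varphi_0}$ for all $t\ge0$.

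Next I would estimate $S_TP\varphi_0$ from the observation. Since $S_t$ commutes with $P$ and $S_{T-t}$ is a contraction for $0\le t\le T$, one has $\norm{S_TP\varphi_0}=\norm{S_{T-t}(S_tP\varphi_0)}\le\norm{S_tP\varphi_0}$ for every $t\in(\delta T,T)$; applying the spectral inequality to $S_tP\varphi_0=PS_t\varphi_0\in\operatorname{ran}P$, then the triangle inequality in $\ell^2(D,m|_D)$ together with the dissipation estimate and $t>\delta T$, gives
\[
\norm{S_TP\varphi_0}\le C\,\norm{(S_tP\varphi_0)|_D}\le C\bigl(\norm{(S_t\varphi_0)|_D}+\norm{S_tQ\varphi_0}\bigr)\le C\bigl(\norm{(S_t\varphi_0)|_D}+\euler^{-\delta\lambda T}\norm{\varphi_0}\bigr) .
\]
As the left-hand side is independent of $t$, comparing $L_r((\delta T,T))$-norms of the two nonnegative sides and invoking Minkowski's inequality (reading $((1-\delta)T)^{1/r}$ as $1$ when $r=\infty$) yields
\[
((1-\delta)T)^{1/r}\norm{S_TP\varphi_0}\le C\,\norm{(S_{(\cdot)}\varphi_0)|_D}_{L_r((\delta T,T);\ell^2(D,m|_D))}+C\,((1-\delta)T)^{1/r}\euler^{-\delta\lambda T}\norm{\varphi_0} .
\]
Finally $\norm{S_T\varphi_0}\le\norm{S_TP\varphi_0}+\norm{S_TQ\varphi_0}$, and substituting the previous display together with $\norm{S_TQ\varphi_0}\le\euler^{-\lambda T}\norm{\varphi_0}\le\euler^{-\delta\lambda T}\norm{\varphi_0}$ and $C\le\kappa$ gives precisely the asserted inequality with $K=\kappa/((1-\delta)T)^{1/r}$ and $\alpha=(\kappa+1)\euler^{-\delta\lambda T}$.

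I expect the only genuine difficulty to be the quantitative uncertainty principle, imported from \cite{LenzSS-20}; within the present argument the delicate point is merely to keep track of its constant and to confirm the bound $C\le\kappa=8\norm{H+1}^2\Inr(\Omega)\vol(\Inr(\Omega))$. Everything else --- the spectral splitting at level $\lambda$, the contractivity of $S_{T-t}$, and the passage from an estimate valid for every $t\in(\delta T,T)$ to the $L_r$-estimate via Minkowski's inequality --- is routine.
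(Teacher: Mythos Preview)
Your proposal is correct and follows essentially the same argument as the paper's proof: spectral splitting at level $\lambda$, the low-energy uncertainty principle of \cite{LenzSS-20} (Proposition~\ref{prop:UP}) yielding the constant $\kappa$, the dissipation estimate from the spectral theorem, contractivity of $(S_t)$, and passage to the $L_r$-norm over $(\delta T,T)$. The only cosmetic differences are that the paper first bounds $\norm{S_\tau\varphi_0}$ and then applies contractivity, while you first apply contractivity to $\norm{S_TP\varphi_0}$ and add the $Q$-part at the end, and that the paper integrates to get the $r=1$ case and then applies H\"older rather than invoking Minkowski directly; both routes give the identical constants.
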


If $D=X$, Theorem~\ref{thm:main} and Theorem~\ref{thm:weak_obs} remain true with $K = 1/((1-\delta) T)^{1/r}$ and $\alpha=0$.

\subsection{Non-\texorpdfstring{$0$}{0}-controllability}

It turns out that, in general, we cannot have (cost-uniform) $0$-controllability, so Theorem \ref{thm:main} with $\alpha=0$ is wrong. 
In this direction we have the following theorem which relates non-$0$-controllability for finite graphs to their covering graphs.

\begin{theorem}\label{thm:CoverNotControllable}
Let $(X_1,b_1,m_1)$ be a finite connected graph and $(X_2,b_2,m_2)$ an amenable covering graph of $(X_1,b_1,m_1)$ with weighted graph Laplacians $H_1$ and $H_2$, respectively, $D_1\subseteq X_1$. If the linear control system $(H_1,D_1)$ is not $(0,T,r,K)$-controllable for some $T>0$, $r\in [1,\infty]$, $K\geq 0$, then for all $T>0$, $r\in [1,\infty]$ and $K\geq 0$ the linear control problem $(H_2, p^{-1}(D_1))$ is not $(0,T,r,K)$-controllable.
\end{theorem}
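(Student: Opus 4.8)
The plan is to exploit the relationship between the spectral properties of $H_1$ on the finite graph $X_1$ and those of $H_2$ on the covering graph $X_2$. The key point is that $0$-controllability of a finite-dimensional linear control system $(H_1, D_1)$ is equivalent (by a Kalman-type rank condition, or by the Hautus test) to an algebraic condition involving the eigenvectors of $H_1$: namely, $(H_1, D_1)$ is $0$-controllable if and only if no eigenvector of $H_1$ vanishes identically on $D_1$. If $(H_1, D_1)$ fails to be $(0,T,r,K)$-controllable for \emph{some} $T, r, K$, then since on a finite graph $0$-controllability (when it holds) holds for all times and all $r$ with some finite cost, failure for one triple means failure of exact controllability to zero, hence there exists an eigenfunction $\phi_1$ of $H_1$, say $H_1\phi_1 = \mu\phi_1$ with $\phi_1 \neq 0$, such that $\phi_1|_{D_1} = 0$.

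Next I would lift $\phi_1$ to $X_2$. Because $p\colon X_2 \to X_1$ is a covering map preserving $m$, $b$, and the neighbor structure, the pullback $\phi_2 := \phi_1 \circ p$ satisfies $H_2\phi_2 = (H_1\phi_1)\circ p = \mu\,\phi_2$ pointwise; this is the standard computation that $H_2(g\circ p) = (H_1 g)\circ p$ using the bijection of neighborhoods. However, $\phi_2$ need not lie in $\ell_2(X_2, m_2)$ since $X_2$ may be infinite and $\phi_1\circ p$ is typically not square-summable. This is where amenability enters: by the definition of amenability there is a Følner sequence $(Y_n)$, and using cutoff functions adapted to $Y_n$ one produces approximate eigenfunctions $\phi_2^{(n)} \in \ell_2(X_2, m_2)$ with $\|(H_2 - \mu)\phi_2^{(n)}\| / \|\phi_2^{(n)}\| \to 0$; concretely one takes $\phi_2^{(n)} := (\phi_1\circ p)\cdot \1_{Y_n}$ (or a smoothed version), and the commutator error $\|(H_2-\mu)\phi_2^{(n)}\|$ is controlled by boundary terms $b(Y_n, Y_n^c)$, which are $o(m(Y_n)) = o(\|\phi_2^{(n)}\|^2)$ since $\phi_1$ is bounded and bounded below in modulus on its support structure. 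Moreover each $\phi_2^{(n)}$ vanishes on $p^{-1}(D_1)$ because $\phi_1$ vanishes on $D_1$ and $p(p^{-1}(D_1)) \subseteq D_1$.

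Then I would run the duality/observability argument to contradict $0$-controllability of $(H_2, p^{-1}(D_1))$. By the standard duality between null-controllability and final-state observability (for an exact controllability to zero with cost $K$ one gets the observability estimate $\|S_T^{(2)}\varphi_0\| \le K \|(S_{(\cdot)}^{(2)}\varphi_0)|_{p^{-1}(D_1)}\|_{L_{r'}}$ for the adjoint, or the $L_1$ version depending on $r$), applying this with $\varphi_0 = \phi_2^{(n)}$ gives a lower bound on the observation term. But $S_t^{(2)}\phi_2^{(n)} \approx \euler^{-t\mu}\phi_2^{(n)}$ up to an error controlled by the approximate-eigenfunction defect, and $\phi_2^{(n)}|_{p^{-1}(D_1)} = 0$, so the observation side is $o(\|\phi_2^{(n)}\|)$ while the left side $\|S_T^{(2)}\phi_2^{(n)}\|$ is $\euler^{-T\mu}\|\phi_2^{(n)}\|(1+o(1))$ — a contradiction once $n$ is large. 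Since $T, r, K$ were arbitrary, $(H_2, p^{-1}(D_1))$ is not $(0,T,r,K)$-controllable for any of them.

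The main obstacle I anticipate is making the approximate-eigenfunction construction quantitative enough: one needs the defect $\|(H_2-\mu)\phi_2^{(n)}\|$ to be genuinely small relative to $\|\phi_2^{(n)}\|$, which requires carefully estimating the truncation commutator in terms of the Følner ratio $b(Y_n,Y_n^c)/m(Y_n)$ and using boundedness of $\phi_1$ (automatic on a finite graph) together with assumptions \eqref{eq:B}, \eqref{eq:M}; and one must correctly transport the chosen controllability failure for one $(T,r,K)$ into the algebraic statement about eigenvectors (invoking that on finite graphs the Hautus/Kalman criterion makes $0$-controllability independent of $T$ and $r$, so failing once means failing structurally), then back into the observability inequality on $X_2$ for \emph{arbitrary} $(T,r,K)$.
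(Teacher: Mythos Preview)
Your plan is essentially the paper's own strategy: obtain an eigenfunction $\phi_1$ of $H_1$ vanishing on $D_1$ via the Hautus test, lift to $\phi_1\circ p$, truncate by a F{\o}lner set, and contradict the dual observability inequality. Two technical points deserve comment. First, the assertion ``$o(m(Y_n))=o(\|\phi_2^{(n)}\|^2)$ since $\phi_1$ is bounded below in modulus on its support structure'' is not automatic: $\phi_1$ vanishes on $D_1$, and an arbitrary F{\o}lner set $Y_n$ could a priori be concentrated in $p^{-1}(D_1)$, making $\|\phi_2^{(n)}\|^2\ll m_2(Y_n)$. The paper resolves this with a preparatory lemma: fix $x_1\in X_1$ where $|\phi_1|=\|\phi_1\|_\infty=1$ and replace $Y_n$ by $Z_n:=B_d(Y_n)$ with $d=\operatorname{diam}(X_1)$; then every ball of radius $d$ meets $p^{-1}(\{x_1\})$, so a double-counting argument gives $b_2(Z_n,Z_n^c)/m_2(Z_n\cap p^{-1}(\{x_1\}))\to 0$, and $\|\phi_2^{(n)}\|^2\geq m_2(Z_n\cap p^{-1}(\{x_1\}))$ follows immediately. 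Second, rather than bounding the generator defect $\|(H_2-\mu)\phi_2^{(n)}\|$ and propagating it through Duhamel as you suggest, the paper estimates the semigroup error directly: it writes $f=(\phi_1\circ p)-h$ with $h=(\phi_1\circ p)1_{Z^c}$, uses stochastic completeness to conclude $\euler^{-tH_2}(\phi_1\circ p)=\euler^{-\lambda t}\phi_1\circ p$ exactly, and then bounds $\sum_{y\in Z^c}\euler^{-tH_2}1_Z(y)\,m_2(y)\leq t\,b_2(Z,Z^c)$ via positivity of the semigroup and an integration in $t$. Your Duhamel approach would also work; the positivity argument is a bit more direct here and avoids having to square-sum the commutator over the boundary.
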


One example where this theorem can be applied is given in the following theorem.

\begin{theorem}
\label{thm:non-null-controllability_Z}
Let $X=\Z$, $m=1$, $b(x,y) = 1$ if $|x-y|=1$ and zero otherwise, $D= 2\Z$, $T>0$, $r\in[1,\infty]$ and $K\geq 0$. Then the linear control problem \eqref{eq:system} is not $(0,T,r,K)$-controllable.
\end{theorem}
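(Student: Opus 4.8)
The plan is to deduce the statement from Theorem~\ref{thm:CoverNotControllable} by realising the graph in question as an amenable covering graph of a small finite graph on which null-controllability already fails. Concretely, take for $(X_1,b_1,m_1)$ the combinatorial $4$-cycle: $X_1:=\Z/4\Z$, $m_1\equiv 1$, and $b_1(i,j):=1$ if $i-j\equiv\pm 1\pmod 4$ and $b_1(i,j):=0$ otherwise; this is finite and connected. Let $p\from \Z\to\Z/4\Z$ be the canonical projection. Then $(X,b,m)$ as in the statement (so $m\equiv 1$ and $b(x,y)=1$ iff $\abs{x-y}=1$) is a covering graph of $(X_1,b_1,m_1)$ via $p$: one has $m_1(p(x))=1=m(x)$, the edge weights correspond under $p$, and $p$ restricts to a bijection from the neighbours $\{x-1,x+1\}$ of $x$ onto the neighbours of $p(x)$ (bijective because $x-1\not\equiv x+1\pmod 4$). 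Moreover $(X,b,m)$ is amenable: with $Y_n:=\{-n,\dots,n\}$ one has $b(Y_n,Y_n^c)=2$ and $m(Y_n)=2n+1$, so the ratio tends to $0$ (equivalently $\inf\sigma(H)=0$). Finally set $D_1:=\{\bar 0,\bar 2\}\varsubsetneq X_1$; then $p^{-1}(D_1)=\{x\in\Z:\ x\equiv 0\text{ or }2\pmod 4\}=2\Z=D$.

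Next I would show that $(H_1,D_1)$ is not null-controllable. On $\ell_2(X_1,m_1)\cong\C^4$ consider the vector $v$ with $v(\bar 0)=v(\bar 2)=0$, $v(\bar 1)=1$, $v(\bar 3)=-1$. A direct computation gives $H_1 v=2v$, while $v|_{D_1}=0$. Hence the solution $\varphi(t)=\euler^{-tH_1}v=\euler^{-2t}v$ of the dual observation problem satisfies $\varphi(t)|_{D_1}=0$ for all $t\geq 0$, whereas $\norm{\euler^{-TH_1}v}=\euler^{-2T}\norm{v}>0$ for every $T>0$. By the duality between weak observability and cost-uniform controllability used to pass from Theorem~\ref{thm:weak_obs} to Theorem~\ref{thm:main} (cf.\ the references cited before Definition~\ref{def:controllability}), this precludes $(0,T,r,K)$-controllability of $(H_1,D_1)$ for every $T>0$, $r\in[1,\infty]$ and $K\geq 0$; in particular $(H_1,D_1)$ is not $(0,T,r,K)$-controllable for \emph{some} such triple. (Equivalently, $v$ is an eigenvector of the self-adjoint operator $H_1$ vanishing on $D_1$, so the Hautus test fails; since $\ell_2(X_1,m_1)$ is finite-dimensional, the system is then not controllable and hence not null-controllable, regardless of $T$.)

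It then remains to invoke Theorem~\ref{thm:CoverNotControllable} with the finite connected graph $(X_1,b_1,m_1)$, its amenable covering graph $(X,b,m)$, and the set $D_1$: since $(H_1,D_1)$ is not $(0,T,r,K)$-controllable for some $T>0$, $r\in[1,\infty]$, $K\geq 0$, the control problem $(H,p^{-1}(D_1))=(H,2\Z)$, which is precisely the problem \eqref{eq:system} for the graph and set in the statement, is not $(0,T,r,K)$-controllable for \emph{any} $T>0$, $r\in[1,\infty]$, $K\geq 0$. This is the claim.

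Given Theorem~\ref{thm:CoverNotControllable}, the argument is short; its content lies entirely in choosing the finite quotient well, so that simultaneously $p^{-1}(D_1)=D$ \emph{and} some eigenfunction of $H_1$ vanishes on $D_1$. The $4$-cycle with $D_1$ the pair of antipodal vertices achieves both: the graph automorphism of $X_1$ swapping $\bar 1$ and $\bar 3$ fixes $D_1$ pointwise and commutes with $H_1$, which forces its $(-1)$-eigenfunction $v$ to vanish on $\{\bar 0,\bar 2\}=D_1$. The only mildly delicate point is checking that $(X,b,m)$ fits the paper's definition of a covering graph of the $4$-cycle (weight compatibility under $p$ and the local bijectivity of $p$), but this is routine.
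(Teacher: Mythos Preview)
Your proposal is correct and follows essentially the same route as the paper: the paper also realises $(\Z,b,m)$ as an amenable covering of the $4$-cycle $\Z/4\Z$ via the canonical projection, takes $D_1=\{\bar 0,\bar 2\}$ so that $p^{-1}(D_1)=2\Z$, exhibits the same eigenfunction $\phi$ (your $v$) vanishing on $D_1$, and then applies Theorem~\ref{thm:CoverNotControllable}. Your write-up is in fact more explicit than the paper's on amenability and on the local bijectivity of $p$.
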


\subsection{A necessary condition for controllability}

We have seen in Theorem \ref{thm:main} that the linear control problem \eqref{eq:system} is $\alpha$-controllable on $d_{\Le}$-relatively dense sets. Here, we state a converse.

\begin{theorem} \label{thm:nessecary}
Let $D \subseteq X$, $T > 0$, $r\in [1,\infty)$, $K \geq 0$, $\alpha \in [0,\euler^{-\Dmax T})$, and assume that the linear control problem \eqref{eq:system} is $(\alpha , T , r , K)$-controllable. Then $D$ is $d_\comb$-relatively dense.
\end{theorem}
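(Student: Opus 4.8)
The plan is to argue by contraposition: assume $D$ is \emph{not} $d_\comb$-relatively dense and show that $(\alpha,T,r,K)$-controllability must fail for every $\alpha < \euler^{-\Dmax T}$. If $\Covr^{d_\comb}(D) = \infty$, then for every $R \in \N$ there is a vertex $x_R \in X$ with $B_R^{d_\comb}(x_R) \cap D = \emptyset$. The idea is to test controllability against the initial datum $f_0 = \delta_{x_R}/\sqrt{m(x_R)}$ (normalised in $\ell_2(X,m)$) and to use finite propagation speed of the control's influence: because the control acts only on $D$ and $H$ has bounded off-diagonal range (nearest-neighbour in the combinatorial graph), the solution $f(T)$ restricted to a small combinatorial ball around $x_R$ should be insensitive to the control for $R$ large. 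Then $\|f(T)\|$ is bounded below by the free evolution $\|S_T f_0\|$ on that ball, which I will show stays close to $\euler^{-\Dmax T}\|f_0\|$ — contradicting $\alpha < \euler^{-\Dmax T}$.

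Concretely, first I would control the free semigroup from below near $x_R$. Write $H = \Deg(x)\,\id - A$ informally, where $A$ is the (weighted) adjacency part; on $\delta_{x_R}$ one has, via the Dyson/Neumann series for $\euler^{-tH}$, that $(S_t \delta_{x_R})(y)$ for $d_\comb(x_R,y) = k$ is an explicit sum over combinatorial paths of length $\ge k$ and is $O(t^k)$. Hence for $R$ fixed and all $y$ with $d_\comb(x_R,y) \le R$, the quantity $|(S_T f_0)(y) - \euler^{-T\Deg(x_R)}f_0(y)|$ is controlled; more robustly, since $\|H\| \le 2\Dmax$ actually one uses the cleaner bound that the diagonal entry $\langle S_T\delta_{x_R},\delta_{x_R}\rangle \ge \euler^{-T\Dmax}$ — this follows from $\langle H\delta_{x_R},\delta_{x_R}\rangle/\|\delta_{x_R}\|^2 = \Deg(x_R) \le \Dmax$ together with Jensen's inequality applied to the spectral measure of $\delta_{x_R}$ (convexity of $t\mapsto \euler^{-ts}$), giving $\langle S_T f_0, f_0\rangle \ge \euler^{-T\langle H f_0,f_0\rangle} \ge \euler^{-\Dmax T}$. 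So $\|S_T f_0\| \ge \euler^{-\Dmax T}\|f_0\|$ for \emph{every} unit vector $f_0$, with no reference to $R$ at all.

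Next comes the role of $R$: I must show the control term $\int_0^T S_{T-\tau}\1_D u(\tau)\,\drm\tau$ contributes little to the relevant component of $f(T)$. Here the natural pairing is with $f_0 = \delta_{x_R}/\sqrt{m(x_R)}$ again: $\langle \int_0^T S_{T-\tau}\1_D u(\tau)\,\drm\tau,\, f_0\rangle = \int_0^T \langle \1_D u(\tau), S_{T-\tau} f_0\rangle\,\drm\tau$, and $S_{T-\tau} f_0$ restricted to $D$ has small norm because $d_\comb(x_R,D) \ge R$ forces $(S_s\delta_{x_R})|_D$ to be $O(s^R)$ uniformly, which after integrating in $\tau$ and using the control-cost bound $\|u\|_{L_r} \le K\|f_0\| = K$ yields a bound of the form $C(T,\Dmax)\, K \cdot (\text{something} \to 0$ as $R\to\infty)$; the tail estimate needs assumption \eqref{eq:B} to sum the path contributions and \eqref{eq:M} only via normalisation. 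Combining, $\|f(T)\| \ge \langle f(T),f_0\rangle \ge \euler^{-\Dmax T} - o_R(1)$, so for $R$ large enough $\|f(T)\| > \alpha = \alpha\|f_0\|$, contradicting controllability. The main obstacle I expect is making the ``finite speed'' tail estimate uniform in the vertex $x_R$ and clean enough to beat the fixed gap $\euler^{-\Dmax T} - \alpha > 0$: one must count combinatorial paths of length $\ge R$ from $x_R$ into $D$ with weights, and the bound $\sum_y b(x,y) \le \Dmax m(x)$ together with $\sup_x m(x) < \infty$ should give a geometric-in-$R$ decay once $R$ exceeds roughly $2\Dmax T$, but pinning down the constant carefully is the delicate point. (The restriction $r < \infty$ enters only through Hölder's inequality when passing from the $L_r$-bound on $u$ to an $L_1$-in-time bound on the pairing; for $r = \infty$ the same argument works verbatim.)
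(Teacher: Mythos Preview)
Your contraposition strategy and choice of test vector $\delta_{x_R}$ match the paper's proof exactly, but your execution of the two key estimates is genuinely different and in fact more elementary than what the paper does.

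For the lower bound $\lVert S_T\delta_x\rVert \geq \euler^{-\Dmax T}$, the paper passes to the associated continuous-time Markov process and bounds $\langle S_{2T}\delta_x,\delta_x\rangle = \PP_x(Z_{2T}=x)$ from below by the probability that the first holding time exceeds $2T$. Your Jensen argument on the spectral measure --- $\langle S_T f_0,f_0\rangle = \int \euler^{-T\lambda}\,\drm\mu_{f_0}(\lambda) \geq \euler^{-T\int\lambda\,\drm\mu_{f_0}} = \euler^{-T\Deg(x_R)}$ --- gives the same inequality with no probability at all; this is a cleaner route. For the tail estimate $\lVert (S_t\delta_{x_R})|_D\rVert\to 0$, the paper again uses the Markov process, bounding by $\PP_{x_R}(Z_t\notin B_{R-1}(x_R))$ and controlling the latter via the Erlang distribution of the sum of holding times. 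Your path-counting idea can be made rigorous more simply than you suggest: since $H^n\delta_{x_R}$ is supported in the combinatorial ball $B_n(x_R)$ and $D\cap B_{R-1}(x_R)=\emptyset$, the truncated series $\sum_{n<R}\frac{(-tH)^n}{n!}\delta_{x_R}$ vanishes on $D$, so $\lVert(S_t\delta_{x_R})|_D\rVert \leq \sum_{n\geq R}\frac{(t\lVert H\rVert)^n}{n!}\leq \sum_{n\geq R}\frac{(2\Dmax t)^n}{n!}$, uniformly in $x_R$ and in $t\in[0,T]$. This bypasses the weighted path-counting you were worried about and does not use \eqref{eq:M}. Finally, the paper packages the conclusion through the duality Theorem~\ref{thm:duality} (failure of the weak observability estimate), whereas you pair directly with $f_0$ in the primal problem; the two are the same computation, since the pairing with the control term is exactly $\int_0^T\langle u(\tau),(S_{T-\tau}f_0)|_D\rangle\,\drm\tau$ and H\"older gives the same $L_{r'}$ observation norm that appears in the dual. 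Your remark that the case $r=\infty$ works verbatim is also consistent with the paper's proof, which in fact runs for all $r\in[1,\infty]$.
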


Note that in Theorem~\ref{thm:nessecary} we show relative density with respect to $d_{\comb}$, while in Theorem~\ref{thm:main} we assume density with respect to $d_{\Le}$.

\subsection{Consequences for exponential stabilization} 
We now turn to stabilizability and state some direct consequences of our result on cost-uniform $\alpha$-controllability.
\begin{definition}
    Let $D\subseteq X$ and $r\in[1,\infty]$. We say that the linear control problem \eqref{eq:system} is \emph{cost-uniformly open-loop stablizable with respect to $L_r$} if there exist $M\geq 1$, $\omega<0$ and $K\geq 0$ such that for all $f_0\in \ell_2(X,m)$ there exists $u\in L_r((0,\infty);\ell_2(D,m|_D))$ such that
    \[\norm{u}_{L_r((0,\infty);\ell_2(D,m|_D))} \leq K\norm{f_0}\qquad\text{and}\qquad \norm{f(t)} \leq M\euler^{\omega t} \norm{f_0},\quad t\geq 0.\]    
\end{definition}

In fact, cost-uniform $\alpha$-controllability with $\alpha\in [0,1)$ is equivalent to cost-uniform open-loop stabilizability, see \cite[Theorem 1]{TrelatWX-20} for the case $r=2$ and \cite[Proposition 1]{EgidiGST-24} for $r \in [1,\infty]$. We reformulate them as a proposition in our context, for a proof see \cite{TrelatWX-20,EgidiGST-24}.

\begin{proposition}
    Let $D\subseteq X$, $r \in[1,\infty]$. Then the following are equivalent.
    \begin{enumerate}
     \item[(i)]
     There exist $\alpha\in [0,1)$, $T>0$ and $K\geq 0$ such that the linear control problem \eqref{eq:system} is cost-uniformly $(\alpha,T,r,K)$-controllable.
     \item[(ii)]
     The linear control problem \eqref{eq:system} is cost-uniformly open-loop stabilizable with respect to $L_r$.
    \end{enumerate}
\end{proposition}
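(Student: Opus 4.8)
The plan is to prove the two implications directly via the standard sampled-data (iteration) argument; since this is a reformulation of \cite[Theorem~1]{TrelatWX-20} and \cite[Proposition~1]{EgidiGST-24}, the work is essentially bookkeeping with the $L_r$-norms.

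For (i) $\Rightarrow$ (ii): fix $\alpha\in[0,1)$, $T>0$, $K\geq 0$ as in (i). Given $f_0\in\ell_2(X,m)$, apply $(\alpha,T,r,K)$-controllability on $(0,T)$ to get a control $u_0$ with $\norm{u_0}_{L_r((0,T);\ell_2(D,m|_D))}\leq K\norm{f_0}$ and $\norm{f(T)}\leq\alpha\norm{f_0}$. Iterating on the intervals $(nT,(n+1)T)$ with initial state $f(nT)$ at step $n$ produces controls $u_n$ with $\norm{u_n}_{L_r}\leq K\norm{f(nT)}\leq K\alpha^{n}\norm{f_0}$ and $\norm{f((n+1)T)}\leq\alpha^{n+1}\norm{f_0}$. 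Concatenating the $u_n$ (using the extension-by-zero convention of Definition~\ref{def:controllability}) yields $u\in L_r((0,\infty);\ell_2(D,m|_D))$ with, for $r<\infty$,
\[
\norm{u}_{L_r((0,\infty);\ell_2(D,m|_D))}\leq \Bigl(\sum_{n=0}^{\infty}(K\alpha^{n})^{r}\Bigr)^{1/r}\norm{f_0}=\frac{K}{(1-\alpha^{r})^{1/r}}\norm{f_0},
\]
and $\norm{u}_{L_\infty}\leq K\norm{f_0}$ for $r=\infty$. To pass from decay at the sampling times to genuine exponential decay, for $t\in[nT,(n+1)T]$ I would write Duhamel's formula, use contractivity of $(S_t)_{t\geq 0}$ together with the fact that $\1_D$ is an isometric embedding, and then Hölder to bound $\norm{\int_{nT}^{t}S_{t-\tau}\1_D u(\tau)\,\drm\tau}\leq T^{1-1/r}\norm{u_n}_{L_r}\leq T^{1-1/r}K\alpha^{n}\norm{f_0}$, so that $\norm{f(t)}\leq(1+T^{1-1/r}K)\alpha^{n}\norm{f_0}$. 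Setting $\omega:=(\log\alpha)/T<0$ (any negative number if $\alpha=0$) and $M:=(1+T^{1-1/r}K)\euler^{-\omega T}\geq 1$ turns this into $\norm{f(t)}\leq M\euler^{\omega t}\norm{f_0}$, which is (ii).

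For (ii) $\Rightarrow$ (i): given $M\geq 1$, $\omega<0$, $K\geq 0$ witnessing open-loop stabilizability, choose $T>0$ so large that $\alpha:=M\euler^{\omega T}<1$. For any $f_0$, take the stabilizing control $u\in L_r((0,\infty);\ell_2(D,m|_D))$; its restriction to $(0,T)$ has $L_r$-norm at most $K\norm{f_0}$, while $\norm{f(T)}\leq M\euler^{\omega T}\norm{f_0}=\alpha\norm{f_0}$. Hence the system is $(\alpha,T,r,K)$-controllable with $\alpha\in[0,1)$, i.e.\ (i) holds. The only delicate points are the measurability/identification issues in concatenating the $u_n$, the replacement of the geometric sum by a supremum when $r=\infty$, and the interpolation step between consecutive sampling times (which is where one really uses contractivity of $(S_t)_{t\geq0}$ and the $L_r$-to-$C$ mapping property of the input operator); I do not expect any genuine obstacle beyond this.
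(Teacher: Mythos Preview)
Your proof is correct and follows the standard sampled-data iteration argument. Note, however, that the paper does not give its own proof of this proposition: it merely reformulates the statement in the present setting and refers the reader to \cite[Theorem~1]{TrelatWX-20} (for $r=2$) and \cite[Proposition~1]{EgidiGST-24} (for general $r\in[1,\infty]$). So there is nothing to compare against beyond the observation that your argument is precisely the one carried out in those references; the geometric iteration for (i)$\Rightarrow$(ii) and the trivial restriction for (ii)$\Rightarrow$(i) are exactly what is done there, and your bookkeeping with the $L_r$-norms, the H\"older interpolation between sampling times, and the handling of the case $\alpha=0$ are all in order.
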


Thus, together with Theorem \ref{thm:main} we can now conclude cost-uniform open-loop stabilizability.

\begin{corollary}
    Let $D\varsubsetneq X$ be $d_{\Le}$-relatively dense, $r\in [1,\infty]$.
    Then the linear control problem \eqref{eq:system} is cost-uniformly open-loop stabilizable with respect to $L_r$.
\end{corollary}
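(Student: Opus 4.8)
The plan is to obtain this as an immediate corollary of Theorem~\ref{thm:main} and the Proposition preceding the statement.

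First, fix any $\alpha \in (0,1)$, say $\alpha = 1/2$. Since $D \varsubsetneq X$ is $d_{\Le}$-relatively dense and $r \in [1,\infty]$, Theorem~\ref{thm:main} yields $T > 0$ and $K \geq 0$ such that the linear control problem \eqref{eq:system} is $(\alpha, T, r, K)$-controllable. In particular, item~(i) of the Proposition holds with these $\alpha \in [0,1)$, $T$ and $K$. Applying the implication (i)$\Rightarrow$(ii) of the Proposition --- which is \cite[Theorem~1]{TrelatWX-20} for $r=2$ and \cite[Proposition~1]{EgidiGST-24} for general $r \in [1,\infty]$ --- we conclude that \eqref{eq:system} is cost-uniformly open-loop stabilizable with respect to $L_r$, which is the assertion.

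There is essentially no obstacle here: the corollary is a purely logical chaining of two results already available, the only point being that Theorem~\ref{thm:main} provides $\alpha$-controllability for \emph{some} $\alpha<1$, which it does since it holds for every $\alpha>0$. If one prefers not to cite the Proposition, the standard direct argument is to iterate the controllability statement on the intervals $[kT,(k+1)T]$, $k=0,1,2,\ldots$: starting from $f_0$, steer to a state of norm $\le \alpha\|f_0\|$ with control cost $\le K\|f_0\|$, and repeat, so that after $k$ steps the state has norm $\le \alpha^k\|f_0\|$. Concatenating the controls gives $u\in L_r((0,\infty);\ell_2(D,m|_D))$ with $\norm{u}_{L_r}\le K(1-\alpha^r)^{-1/r}\norm{f_0}$ for $r<\infty$ and $\norm{u}_{L_\infty}\le K\norm{f_0}$ for $r=\infty$; using contractivity of $(S_t)_{t\ge 0}$ together with Duhamel's formula to bound the growth of $f$ within each interval, one obtains $\norm{f(t)}\le M\euler^{\omega t}\norm{f_0}$ with $\omega=(\ln\alpha)/T<0$ and a suitable $M\ge 1$. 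Invoking the Proposition, however, makes this bookkeeping unnecessary.
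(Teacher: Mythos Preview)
Your proof is correct and matches the paper's approach exactly: the corollary is obtained by combining Theorem~\ref{thm:main} (applied with some $\alpha\in(0,1)$) with the preceding Proposition. The additional sketch of the iteration argument is accurate but, as you note, unnecessary once the Proposition is invoked.
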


\begin{definition}
    Let $D\subseteq X$. We say that the linear control problem \eqref{eq:system} is \emph{closed-loop stablizable} (also called \emph{exponential stabilizable}) if there exists a bounded linear operator $F\colon \ell_2(D,m|_D) \to \ell_2(X,m)$ such that $-H+\1_DF$ generates an exponentially stable $C_0$-semigroup $(S^F_t)_{t\geq 0}$, i.e.\ there exist $M\geq 1$ and $\omega<0$ such that 
    \[\norm{S^F_t} \leq M\euler^{\omega t},\quad t\geq 0.\]
\end{definition}
For closed-loop stabilizable control problems, the operator $F$ is called \emph{feedback operator} and the control function $u\in L_2((0,\infty);\ell_2(D,m|_D))$ is given by $u(t) := F(t) = FS_t f_0$ for all $t\geq 0$ and $f_0\in \ell_2(X,m)$.

In the Hilbert space case, i.e.\ $r=2$ in our situation, cost-uniform open-loop stabilizability also implies closed-loop stabilizability; cf.\ \cite[Theorem 4.3]{Zabczyk-08}. Thus, we obtain the next corollary.

\begin{corollary}
    Let $D\varsubsetneq X$ be $d_{\Le}$-relatively dense.
    Then the linear control problem \eqref{eq:system} with $r=2$ is closed-loop stabilizable.
\end{corollary}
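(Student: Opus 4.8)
The plan is simply to chain together the results already assembled above, specialising to the Hilbert space case $r=2$. First, fix any $\alpha\in(0,1)$, say $\alpha:=1/2$, and set $r=2$. Since $D\varsubsetneq X$ is $d_{\Le}$-relatively dense, Theorem~\ref{thm:main} produces $T>0$ and $K\geq 0$ such that the linear control problem \eqref{eq:system} is $(1/2,T,2,K)$-controllable; in particular condition~(i) of the Proposition holds with $\alpha=1/2<1$. Equivalently — and this is precisely the statement of the preceding Corollary — \eqref{eq:system} with $r=2$ is cost-uniformly open-loop stabilizable: there exist $M\geq 1$, $\omega<0$ and $K\geq 0$ so that every $f_0\in\ell_2(X,m)$ admits $u\in L_2((0,\infty);\ell_2(D,m|_D))$ with $\norm{u}_{L_2((0,\infty);\ell_2(D,m|_D))}\leq K\norm{f_0}$ and $\norm{f(t)}\leq M\euler^{\omega t}\norm{f_0}$ for all $t\geq 0$.

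The second and only substantive step is to upgrade this open-loop statement to a closed-loop one, i.e.\ to produce an actual bounded feedback operator $F$. In the Hilbert space setting this is classical: one invokes \cite[Theorem~4.3]{Zabczyk-08}, which asserts that for a linear control system on a Hilbert space, cost-uniform open-loop stabilizability implies the existence of a bounded feedback operator $F$ such that $-H+\1_D F$ generates an exponentially stable $C_0$-semigroup. Note that since $H$ is bounded by assumption~\eqref{eq:B} and $F$ is bounded, $-H+\1_D F$ is a bounded operator and hence automatically generates a (norm-continuous) $C_0$-semigroup $(S^F_t)_{t\geq 0}$; the nontrivial content supplied by \cite[Theorem~4.3]{Zabczyk-08} is only the estimate $\norm{S^F_t}\leq M\euler^{\omega t}$ with some $\omega<0$. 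This is exactly closed-loop stabilizability in the sense of the definition above, which finishes the proof.

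The point one should be careful about is that the passage from open-loop to closed-loop stabilizability genuinely relies on the Hilbert space structure, which is why the corollary is stated only for $r=2$; in the general $L_r$ framework of \cite{EgidiGST-24} this implication is not available. Apart from this restriction, every step is a direct citation — Theorem~\ref{thm:main}, the Proposition (equivalently, the preceding Corollary), and \cite[Theorem~4.3]{Zabczyk-08} — so no serious obstacle arises; the ``hard work'' has already been done in the proof of Theorem~\ref{thm:weak_obs}.
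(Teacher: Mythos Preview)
Your proposal is correct and follows exactly the route taken in the paper: apply Theorem~\ref{thm:main} (with some $\alpha\in(0,1)$ and $r=2$) to obtain cost-uniform $\alpha$-controllability, deduce cost-uniform open-loop stabilizability via the Proposition, and then invoke \cite[Theorem~4.3]{Zabczyk-08} to upgrade to closed-loop stabilizability in the Hilbert space setting. Your additional remarks on why $r=2$ is needed and on the boundedness of $-H+\1_D F$ are correct but go slightly beyond what the paper records, which treats the corollary as an immediate consequence of the cited results.
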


\section{Proofs}
\label{sec:proof_obs_control}
\subsection{Proofs of Theorem \ref{thm:main} and Theorem \ref{thm:weak_obs}}

The controllability property in Definition~\ref{def:controllability} is equivalent to a weak observability estimate of the adjoint system. This equivalence has been established in \cite{TrelatWX-20} for linear control problems in Hilbert spaces and $r=2$, and subsequently generalized to Banach spaces in \cite{EgidiGST-24}. For a precise mathematical formulation, let us consider the adjoint system to \eqref{eq:system}; that is,
\begin{equation} \label{eq:AdjointSystem}
\begin{aligned}
  \dot{\varphi}(t) & = -H \varphi(t), \quad & & t>0  ,\quad \varphi(0)  = \varphi_0 \in \ell_2 (X,m), \\
  \psi(t)  &= \varphi(t)|_D, \quad   & & t \geq 0.
  \end{aligned}\tag{\text{Adj}$(H,D)$}
\end{equation}
\begin{definition}
Let $T > 0$, $\alpha , K \geq 0$, and $r \in [1,\infty]$. The adjoint problem \eqref{eq:AdjointSystem} satisfies a \emph{weak observability estimate at time $T$ with constants $\alpha$ and $K$ in $L_{r}$} if for all $\varphi_0 \in \ell_2 (X,m)$ we have
\[
 \lVert S_T \varphi_0 \rVert_{\ell_2 (X,m)} \leq K \lVert (S_{(\cdot)} \varphi_0)|_D  \rVert_{L_{r} ((0,T);\ell_2 (D,m|_D))} + \alpha \lVert \varphi_0 \rVert_{\ell_2 (X,m)}.
\]
We will use the short hand notation $(T,\alpha,K,r)$-observability estimate for this property.
\end{definition}
The following theorem is a special case of \cite[Theorem~1]{TrelatWX-20} if $r = 2$. If $r \not = 2$ it follows from \cite[Theorem~2.8]{EgidiGST-24}.
\begin{theorem}[\cite{TrelatWX-20,EgidiGST-24}]\label{thm:duality}
    Let $T>0$, $r,r' \in[1,\infty]$ such that $1/r+1/r' = 1$, and $\alpha,K \geq 0$. Then the following are equivalent.
    \begin{enumerate}
     \item[(i)]
     The linear control problem \eqref{eq:system} is $(\alpha,T,r,K)$-controllable.
     \item[(ii)]
     The adjoint problem \eqref{eq:AdjointSystem} satisfies an $(T,\alpha,K,r')$-observability estimate.
    \end{enumerate}
\end{theorem}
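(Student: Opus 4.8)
The plan is to recast Theorem~\ref{thm:duality} as an instance of the classical observability--controllability duality between two bounded operators and to deduce it from a Hahn--Banach separation argument, the only genuine difficulty being the non-reflexive endpoint exponents $r\in\{1,\infty\}$. Write $Z:=\ell_2(X,m)$ and $U:=L_r((0,T);\ell_2(D,m|_D))$, and introduce the \emph{input map}
\[
 B_T u:=\int_0^T S_{T-\tau}\1_D u(\tau)\,\drm\tau\in Z,\qquad u\in U,
\]
so that the mild solution of \eqref{eq:system} satisfies $f(T)=S_T f_0+B_T u$. Since $(S_t)_{t\ge0}$ is contractive and $\1_D$ is bounded, Hölder's inequality gives $\norm{B_Tu}\le T^{1-1/r}\norm{u}_U$, so $B_T\in\mathcal L(U,Z)$. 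First I would compute its adjoint: using $S_t^\ast=S_t$, $\1_D^\ast g=g|_D$ and the substitution $s=T-\tau$ one finds $(B_T^\ast\varphi)(\tau)=(S_{T-\tau}\varphi)|_D$ and
\[
 \norm{B_T^\ast\varphi}_{L_{r'}((0,T);\ell_2(D,m|_D))}=\norm{(S_{(\cdot)}\varphi)|_D}_{L_{r'}((0,T);\ell_2(D,m|_D))}.
\]
Hence, writing $A:=S_T=S_T^\ast\in\mathcal L(Z)$, the $(T,\alpha,K,r')$-observability estimate for \eqref{eq:AdjointSystem} is exactly the inequality
\begin{equation}\label{eq:star-duality}
 \norm{A^\ast\varphi}\le K\,\norm{B_T^\ast\varphi}_{U^\ast}+\alpha\,\norm{\varphi}\qquad(\varphi\in Z),
\end{equation}
where the norm of $B_T^\ast\varphi$ in $U^\ast$ agrees with its norm in $L_{r'}((0,T);\ell_2(D,m|_D))$, while $(\alpha,T,r,K)$-controllability of \eqref{eq:system} says precisely that for every $f_0\in Z$ there is $u\in U$ with $\norm{u}_U\le K\norm{f_0}$ and $\norm{Af_0+B_Tu}\le\alpha\norm{f_0}$. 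It therefore remains to prove the equivalence of these two statements.

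The implication controllability $\Rightarrow$ \eqref{eq:star-duality} is the easy one and uses no duality properties of $U$: given $\varphi\in Z$, apply controllability to an arbitrary $f_0\in Z$, take the corresponding $u$, and estimate
\[
 \abs{\langle f_0,A^\ast\varphi\rangle}=\abs{\langle Af_0,\varphi\rangle}\le\abs{\langle Af_0+B_Tu,\varphi\rangle}+\abs{\langle u,B_T^\ast\varphi\rangle}\le\bigl(\alpha\norm{\varphi}+K\norm{B_T^\ast\varphi}_{U^\ast}\bigr)\norm{f_0};
\]
taking the supremum over $\norm{f_0}\le1$ yields \eqref{eq:star-duality}.

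For the converse I would first note, by the linearity and homogeneity of \eqref{eq:system}, that controllability is equivalent to the single set inclusion $A(\overline B_Z)\subseteq C$, where $\overline B_Z$ and $\overline B_U$ are the closed unit balls of $Z$ and $U$ and $C:=B_T(K\overline B_U)+\alpha\overline B_Z$; indeed, one direction applies the defining property at $f_0/\norm{f_0}$ and rescales the control, the other is trivial. The set $C$ is convex, and for $r\in(1,\infty)$ it is weakly closed: $\overline B_U$ is weakly compact by reflexivity, $B_T$ is weak--weak continuous, so $B_T(K\overline B_U)$ is weakly compact, and adding the weakly compact set $\alpha\overline B_Z$ preserves this. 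Now, if $A(\overline B_Z)\not\subseteq C$, choose $z_0$ with $\norm{z_0}\le1$ and $Az_0\notin C$; by Hahn--Banach separation there is $\varphi\in Z$ with
\[
 \operatorname{Re}\langle Az_0,\varphi\rangle>\sup_{w\in C}\operatorname{Re}\langle w,\varphi\rangle=K\,\norm{B_T^\ast\varphi}_{U^\ast}+\alpha\,\norm{\varphi},
\]
the last equality holding because the support function of $C$ is the sum of those of the two balls. Since $\operatorname{Re}\langle Az_0,\varphi\rangle\le\abs{\langle z_0,A^\ast\varphi\rangle}\le\norm{A^\ast\varphi}$, this contradicts \eqref{eq:star-duality}; hence $A(\overline B_Z)\subseteq C$, i.e.\ controllability holds.

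The hard part is the non-reflexive endpoints $r\in\{1,\infty\}$, where the above compactness argument for closedness of $C$ is not available. For $r=\infty$ it can still be rescued: $\overline B_U$ is weak-$\ast$ compact as the unit ball of $\bigl(L_1((0,T);\ell_2(D,m|_D))\bigr)^\ast$, and since $B_T^\ast\varphi\in L_1((0,T);\ell_2(D,m|_D))$ the map $B_T$ is weak-$\ast$--weak continuous, so $B_T(K\overline B_U)$ is again weakly compact and the argument goes through verbatim. For $r=1$, however, $B_T(K\overline B_U)$ is a heat-semigroup reachable-type set which in general fails to be norm-closed, so the separation argument only delivers $A(\overline B_Z)\subseteq\overline C$, i.e.\ an approximate version of controllability; upgrading this to the exact statement with the stated constants is the point where one genuinely needs the more careful Banach-space treatment (working with preduals and a limiting procedure) carried out in \cite{EgidiGST-24}, and for the Hilbert-space case $r=2$ in \cite{TrelatWX-20}, which I would invoke here.
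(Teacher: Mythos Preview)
The paper does not supply its own proof of Theorem~\ref{thm:duality}: it records that the statement is a special case of \cite[Theorem~1]{TrelatWX-20} for $r=2$ and of \cite[Theorem~2.8]{EgidiGST-24} for general $r\in[1,\infty]$, and cites those references as the proof. Your proposal therefore goes beyond what the paper does. The argument you give is correct for $r\in(1,\infty)$ (reflexivity gives weak compactness of $B_T(K\overline B_U)$ and hence closedness of $C$) and for $r=\infty$ (your observation that $B_T^\ast$ lands in the predual $L_1$, so that $B_T$ is weak-$\ast$--weak continuous and Banach--Alaoglu applies, is exactly the right fix). You also correctly isolate $r=1$ as the genuinely delicate endpoint where $C$ need not be closed and the separation argument alone yields only an approximate statement, and you appropriately defer that case to \cite{EgidiGST-24}. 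In short, there is nothing to compare against in the paper itself; your sketch is a sound expansion of what the paper treats as a black-box citation, with the honest admission that the $r=1$ case still requires the cited reference.
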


For a Borel set $I\subseteq \R$ we denote the spectral projection of $H$ by $P_I(H):=\1_{I}(H)$.
In \cite{LenzSS-20}, the following quantitative uncertainty principle for small energies was shown for the metric $d_{\Le}$.

\begin{proposition}[{\cite[Corollary 5.2]{LenzSS-20}}]
\label{prop:UP}
    Let $D\varsubsetneq X$ be $d_{\Le}$-relatively dense and $\Omega:=X\setminus D$. Let $I\subseteq \R$ be a Borel set such that $\sup I < \bigl(\Inr(\Omega)\vol(\Inr(\Omega))\bigr)^{-1}$. Then
    \[P_I(H) \leq \frac{16 \norm{H+1}^4}{\bigl(\bigl(\Inr(\Omega)\vol(\Inr(\Omega))\bigr)^{-1}-\sup I\bigr)^2} P_I(H)\1_D P_I(H).\]
\end{proposition}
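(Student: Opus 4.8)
The plan is to prove the quantitative uncertainty principle of Proposition~\ref{prop:UP} in two stages: first a \emph{qualitative} uncertainty principle at low energies via a Poincar\'e-type argument, then a perturbation/functional-calculus step that upgrades it to the stated \emph{quantitative} inequality with the explicit constant $16\norm{H+1}^4$. The core object is the positive operator $A := \1_D P_I(H) \1_D = (\1_D P_I(H))(\1_D P_I(H))^*$ viewed on the range of $P_I(H)$; the inequality to be shown is equivalent to the operator bound $P_I(H) \leq C\, P_I(H)\,\1_D\, P_I(H)$ with $C = 16\norm{H+1}^4/(\Lambda - \sup I)^2$, where I abbreviate $\Lambda := (\Inr(\Omega)\vol(\Inr(\Omega)))^{-1}$. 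Equivalently, writing $f = P_I(H)f$ for a low-energy vector, I must produce a lower bound $\norm{\1_D f}^2 \geq C^{-1}\norm{f}^2$, i.e.\ control the mass of $f$ on $D$ from below by the total mass, using that $f$ has spectrum in $I$ and that $\Omega = X\setminus D$ has bounded inradius.

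First I would establish the geometric heart of the matter: a local Poincar\'e inequality on $\Omega$. Since $D$ is $d_\Le$-relatively dense with $\Omega$ having finite inradius $R := \Inr(\Omega)$, every vertex in $\Omega$ lies within length-distance $R$ of $D$, so each connected piece of $\Omega$ is ``thin'' and is surrounded by $D$. The key estimate is that for any $g \in \ell_2(X,m)$,
\[
 \norm{\1_\Omega g}^2 \leq R\,\vol(R)\, \langle H g, g\rangle + (\text{boundary terms controlled by } \1_D g),
\]
reflecting that a function vanishing on $D$ cannot be too large on $\Omega$ unless its Dirichlet energy $\langle Hg,g\rangle$ is large; the factor $R\,\vol(R)$ arises from summing $1/b$ along paths of length $\leq R$ (bounding $|g(x)|^2$ by path-energy via Cauchy--Schwarz) and from the ball-volume bound $\vol(R)=\sup_x m(B_R(x))$. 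This yields $\Lambda^{-1}\norm{\1_\Omega g}^2 \lesssim \langle Hg,g\rangle + \norm{\1_D g}^2$-type control, which is exactly where the quantity $\Lambda = (R\,\vol(R))^{-1}$ enters as the effective spectral threshold.

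Next I would combine this with the low-energy spectral constraint. For $f$ in the range of $P_I(H)$ we have $\langle Hf,f\rangle \leq (\sup I)\norm{f}^2$, so the Poincar\'e inequality forces most of the mass of $f$ to sit on $D$ whenever $\sup I < \Lambda$: schematically, $\norm{f}^2 = \norm{\1_D f}^2 + \norm{\1_\Omega f}^2$ and the second term is absorbed because $\Lambda^{-1}\norm{\1_\Omega f}^2 \leq \langle Hf,f\rangle + (\text{$D$-terms}) \leq (\sup I)\Lambda^{-1}\norm{f}^2 + \cdots$, leaving a gap proportional to $(\Lambda - \sup I)$. Rearranging produces $\norm{\1_D f}^2 \geq c\,(\Lambda-\sup I)\norm{f}^2$ for a single power of the gap; the \emph{square} $(\Lambda - \sup I)^2$ in the denominator and the weight $\norm{H+1}^4$ come from the final polishing step, where one inserts resolvent-type factors $(H+1)^{\pm 1}$ to convert the energy form $\langle Hf,f\rangle$ into the projected form $P_I \1_D P_I$ and squares the resulting estimate (or iterates it once) to land on the sharp operator inequality; tracking the constants $\norm{H+1}$ through this functional-calculus manipulation and through two applications of Cauchy--Schwarz is what manufactures the stated numerical constant $16$.

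The main obstacle I expect is the geometric Poincar\'e step and its bookkeeping: one must carefully handle the \emph{boundary} between $\Omega$ and $D$ (the energy form $\langle Hg,g\rangle$ couples $\Omega$-values to adjacent $D$-values, so the estimate is not purely internal to $\Omega$), ensure the path-length argument uses $d_\Le$ rather than $d_\comb$ consistently so that the factor is genuinely $R\,\vol(R) = \Inr(\Omega)\vol(\Inr(\Omega))$, and then propagate the constant through the resolvent-weighting without loss so that the exponent on $\norm{H+1}$ is exactly $4$ and the prefactor exactly $16$. This is precisely the content that is made quantitative in \cite[Corollary~5.2]{LenzSS-20}, whose argument I would follow and cite for the sharp constants while giving the Poincar\'e estimate and the low-energy absorption in full.
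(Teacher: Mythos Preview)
The paper does not prove Proposition~\ref{prop:UP} at all: it is quoted verbatim as \cite[Corollary~5.2]{LenzSS-20} and used as a black box in the proof of Theorem~\ref{thm:weak_obs}. So there is nothing in the paper to compare your argument against; any proof you supply goes strictly beyond what the authors do.

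That said, a few comments on your sketch. The overall two-stage strategy --- a Poincar\'e-type estimate on $\Omega = X\setminus D$ exploiting the finite $d_\Le$-inradius, combined with the spectral constraint $\langle Hf,f\rangle \leq (\sup I)\norm{f}^2$ for $f$ in the range of $P_I(H)$ --- is indeed the mechanism behind such uncertainty principles, and the entry point $\Lambda = (\Inr(\Omega)\vol(\Inr(\Omega)))^{-1}$ is correctly identified. However, your explanation for the passage from a linear gap $(\Lambda - \sup I)$ to the \emph{squared} denominator $(\Lambda - \sup I)^2$ and for the precise weight $\norm{H+1}^4$ is vague: ``inserting resolvent-type factors $(H+1)^{\pm1}$ \ldots\ and squares the resulting estimate (or iterates it once)'' is not an argument, and it is exactly here that the proof requires care. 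In the actual derivation one does not iterate a scalar inequality; rather, one first proves an operator inequality of the form $\1_\Omega \leq c\,(H+1)$ (or a closely related estimate with the Dirichlet form on $\Omega$), then sandwiches with $P_I(H)$ and manipulates the resulting operator bounds, which is what produces the square and the fourth power simultaneously. If you intend to reproduce the proof rather than cite it, this is the step that needs to be written out honestly; otherwise, since you yourself conclude by deferring to \cite{LenzSS-20} for the constants, your ``proof'' collapses to the same citation the paper already gives.
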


\begin{remark}
 Using the results from \cite{ls}, (see also an announcement in \cite{p-owr}, in particular Theorem 1) and \cite{lss18}, in particular Corollary 3.7,  we obtain the following alternative bound, which uses a slightly more incisive energy condition but yields somewhat simpler bounds, in particular if $m$ is the counting measure:\\
 Let $I\subseteq \R$ be a Borel set such that
 $$\sup I \le \frac{\inf m}{42 \Inr(\Omega)\vol(\Inr(\Omega))^2}.$$ Then
    \[P_I(H) \leq \frac{42 \vol(\Inr(\Omega))}{\inf m} P_I(H)\1_D P_I(H).\]
\end{remark}

\begin{proof}[Proof of Theorem \ref{thm:weak_obs}]
    The proof is an application of \cite[Proposition 2]{EgidiGST-24} taking into account Proposition \ref{prop:UP}. We provide here the details for the readers convenience.
    
    Let $I:=(-\infty,\lambda]$ and $P_\lambda:=P_I(H)$. Let $\psi\in \ell_2(X,m)$. 
    We have \[\norm{P_\lambda S_\tau \psi} \leq \kappa \norm{(P_\lambda S_\tau \psi)|_D}_{\ell_2(D,m|_D)}\]
    for all $\tau \geq 0$ by Proposition \ref{prop:UP}.
    For $\tau\in [\delta T,T]$ we have
    \[\norm{(\id-P_\lambda)S_\tau \psi} \leq \euler^{-\tau \lambda} \norm{\psi}\]
    by the spectral theorem.
    Thus, for these $\tau$,
    \[\norm{S_\tau \psi} \leq \norm{P_\lambda S_\tau \psi} + \norm{(\id-P_\lambda)S_\tau \psi} \leq \kappa \norm{(P_\lambda S_\tau \psi)|_D}_{\ell_2(D,m|_D)} + \euler^{-\tau \lambda} \norm{\psi}.\]
    Further,
    \begin{align*}
        \norm{(P_\lambda S_\tau \psi)|_D}_{\ell_2(D,m|_D)} & \leq \norm{(S_\tau \psi)|_D}_{\ell_2(D,m|_D)} + \norm{((\id-P_\lambda) S_\tau \psi)|_D}_{\ell_2(D,m|_D)} \\
        & \leq \norm{(S_\tau \psi)|_D}_{\ell_2(D,m|_D)} + \norm{(\id-P_\lambda) S_\tau \psi},
    \end{align*}
    so
    \[\norm{S_\tau \psi}\leq \kappa \norm{(S_\tau \psi)|_D}_{\ell_2(D,m|_D)} + (\kappa+1) \euler^{-\tau \lambda} \norm{\psi}.\]
    Now, taking into account that $(S_t)$ is a contractive semigroup and that $\tau\geq \delta T$, we arrive at
    \[\norm{S_T\psi} \leq \norm{S_{T-\tau}}\norm{S_\tau \psi} \leq \kappa \norm{(S_\tau \psi)|_D}_{\ell_2(D,m|_D)} + (\kappa+1) \euler^{-\delta\lambda T} \norm{\psi}.\]
    Integrating with respect to $\tau\in[\delta T,T]$, we conclude
    \[\norm{S_T\psi} \leq \kappa \frac{1}{(1-\delta)T} \norm{(S_{(\cdot)} \psi)|_D}_{L_1((\delta T,T);\ell_2(D,m|_D))} + (\kappa+1) \euler^{-\delta\lambda T} \norm{\psi},\]
    which yields the assertion for $r=1$. An additional application of H\"older's inequality then yields the assertion for general $r$.
\end{proof}

\begin{proof}[Proof of Theorem \ref{thm:main}]
    We apply Theorem \ref{thm:weak_obs} with $\delta=1/2$, 
    $T = -\ln (\alpha / (\kappa+1)) / (\delta\lambda )>0$,
    and $r'\in[1,\infty]$ instead of $r$ and obtain that \eqref{eq:AdjointSystem} satisfies an $(T,\alpha,K,r')$-observability estimate. Now, Theorem \ref{thm:duality} yields that \eqref{eq:system} is $(\alpha,T,r,K)$-controllable.
\end{proof}

\begin{remark}
    In case of the continuum $\R^d$ instead of a graph it is well-known that the heat equation is $0$-controllable if and only if $D$ is a so-called thick set \cite{EgidiV-18,WangWZZ-19,BombachGST-23}. 
    This is a consequence of the fact that a quantitative uncertainty principle (sometimes also called spectral inequality or Carleman estimate) is valid for arbitrary large energies, together with a suitable dissipation estimate.
    Since in our case of heat equations on graphs we cannot achieve $0$-controllability in general by Theorem \ref{thm:non-null-controllability_Z}, in turn, we cannot expect to get a quantitative uncertainty principle as in Proposition \ref{prop:UP} for arbitrary energy intervals $I\subseteq \R$, in particular those which reach the upper bound of the spectrum of the Laplacian $H$, which fits to the dissipation estimate trivially coming from spectral theory. We refer to \cite{LebeauR-95,TenenbaumT-11,BeauchardP-18,NakicTTV-20,GallaunST-20,BombachGST-23a} for more on the abstract version of the so-called Lebeau--Robbiano method to show observability and, thus, $0$-controllability.
\end{remark}

\subsection{Proofs of Theorem \ref{thm:CoverNotControllable} and  Theorem \ref{thm:non-null-controllability_Z}}

We start with a lemma.

\begin{lemma}\label{lem:amenableCover}
Let $(X_1,b_1,m_1)$ be a finite connected graph and $(X_2,b_2,m_2)$ an amenable covering graph of $(X_1,b_1,m_1)$. Let $x_1 \in X_1$. Then, there exists a sequence $(Z_n)_{n\in\N}$ of finite subsets of $X_2$ such that
\[
\frac{b_2(Z_n,Z_n^c)}{m_2(Z_n\cap p^{-1}(\{x_1\}))} \to 0 \mbox{ as }n \to \infty.
\]
\end{lemma}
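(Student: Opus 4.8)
The plan is to start from a Følner sequence $(Y_n)$ for $(X_2,b_2,m_2)$, whose existence is guaranteed by amenability, and to cure the one defect that keeps it from working directly: in the Følner quotient the denominator is $m_2(Y_n)$, whereas here we need the potentially much smaller quantity $m_2\bigl(Y_n\cap p^{-1}(\{x_1\})\bigr)$. Instead of $Y_n$ itself I would work with its combinatorial $R$-enlargement $Z_n:=\{x_2\in X_2:\ d_{\comb}(x_2,Y_n)\le R\}\supseteq Y_n$, where $R:=\operatorname{diam}_{d_{\comb}}(X_1)<\infty$, and prove two comparisons: (i) a fixed proportion of the cardinality of $Y_n$ survives inside $Z_n\cap p^{-1}(\{x_1\})$, and (ii) the weighted edge boundary satisfies $b_2(Z_n,Z_n^c)\le c\,b_2(Y_n,Y_n^c)$ with a constant $c$ independent of $n$. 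All the relevant geometric quantities — the maximal vertex degree, the extremal positive values of $b_1$, and $\max m_1$, $\min m_1$ — are finite and positive because $X_1$ is finite, and by the defining relations of a covering the same bounds pass to $(X_2,b_2,m_2)$; in particular every combinatorial ball of radius $R$ in $X_2$ has at most $C$ elements for a fixed $C$.

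The key structural ingredient is \emph{path lifting}: since $p$ restricts to a bijection between the neighbours of any vertex of $X_2$ and the neighbours of its image, a combinatorial geodesic in $X_1$ from $p(x_2)$ to $x_1$ lifts to a path in $X_2$ starting at $x_2$ and ending in $p^{-1}(\{x_1\})$, so $d_{\comb}\bigl(x_2,p^{-1}(\{x_1\})\bigr)\le R$ for every $x_2\in X_2$. Applying this to each $y\in Y_n$ produces a map $Y_n\to Z_n\cap p^{-1}(\{x_1\})$ whose fibres sit inside radius-$R$ balls and hence have at most $C$ elements; this gives $\bigl|Z_n\cap p^{-1}(\{x_1\})\bigr|\ge |Y_n|/C$, and since $m_2$ equals $m_1(x_1)$ identically on $p^{-1}(\{x_1\})$ while $m_2(Y_n)\le (\max m_1)\,|Y_n|$, estimate (i) follows in the form $m_2\bigl(Z_n\cap p^{-1}(\{x_1\})\bigr)\ge c'\,m_2(Y_n)$. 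For (ii) I would observe that a vertex of $Z_n$ having a neighbour in $Z_n^c$ must lie at combinatorial distance exactly $R$ from $Y_n$, hence within a radius-$R$ ball of the set $\partial Y_n$ of vertices in $Y_n$ with a neighbour in $Y_n^c$; bounded geometry then bounds the vertex boundary of $Z_n$ by $C\,|\partial Y_n|$, and combining the uniform upper bound on $\sum_{y_2}b_2(x_2,y_2)$ with the uniform lower bound $\beta$ on a positive edge weight leaving each vertex of $\partial Y_n$ toward $Y_n^c$ yields $b_2(Z_n,Z_n^c)\le c\,b_2(Y_n,Y_n^c)$.

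Putting (i) and (ii) together, the quotient $b_2(Z_n,Z_n^c) / m_2\bigl(Z_n\cap p^{-1}(\{x_1\})\bigr)$ is at most a fixed multiple of the Følner quotient $b_2(Y_n,Y_n^c)/m_2(Y_n)$, which tends to $0$; this is the assertion, and the $Z_n$ are finite since each $Y_n$ is finite and $X_2$ has bounded geometry. The degenerate case $|X_1|=1$ should be dispatched separately and is immediate: then $b_1\equiv 0$, hence $b_2\equiv 0$, and connectedness forces $X_2$ to be a single vertex, so the constant sequence $Z_n:=X_2$ works. I do not expect a genuine obstacle here; the real content is the realisation that one must pass from $Y_n$ to the enlargement $Z_n$ and that the covering property is exactly what makes \emph{both} the mass lower bound and the boundary upper bound hold uniformly in $n$ — the individual estimates are routine once the bounded-geometry constants have been named.
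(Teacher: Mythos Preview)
Your proposal is correct and follows essentially the same route as the paper: enlarge the F{\o}lner sets $Y_n$ to $Z_n:=B_R(Y_n)$ with $R$ the combinatorial diameter of $X_1$, then prove the two comparisons $m_2\bigl(Z_n\cap p^{-1}(\{x_1\})\bigr)\gtrsim m_2(Y_n)$ and $b_2(Z_n,Z_n^c)\lesssim b_2(Y_n,Y_n^c)$ using bounded geometry and the fact that every point of $X_2$ is within distance $R$ of $p^{-1}(\{x_1\})$. Your write-up actually supplies more detail than the paper's proof --- in particular the explicit path-lifting argument for the mass lower bound and the geodesic argument placing the inner vertex boundary of $Z_n$ inside $B_R(\partial Y_n)$ --- where the paper simply appeals to ``double counting'' and the inclusion of inner vertex boundaries.
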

Here, $b_2(Z, Z^c) = \sum_{x\in Z}\sum_{y\in Z^c} b_2(x,y)$.

\begin{proof}
Let $d:=\sup_{x_1,y_1\in X_1} d_1(x_1,y_1)<\infty$ be the diameter of $(X_1,b_1,m_1)$ and let $(Y_n)_{n\in\N}$ be a sequence of finite subsets of $X_2$ such that
\[
\frac{b_2(Y_n,Y_n^c)}{m_2(Y_n)} \to 0 \mbox{ as }n \to \infty.
\]
We define $Z_n := B_d(Y_n) =\bigcup_{z\in Y_n} B_d(z)$ for $n\in\N$.
We claim that there exists $C\geq 0$ such that
\[
b_2(Z_n,Z_n^c) \leq C b_2(Y_n, Y_n^c)
\]
and
\[
m_2(Y_n) \leq C m_2(Z_n\cap p^{-1}(\{x_1\})),
\]
from which the lemma follows easily.

For the first inequality, we notice that the edge boundary is comparable with the inner vertex boundary, as the geometry is bounded.
Moreover, the inner vertex boundary of $Z_n$ is contained in the ball of radius $d$ around the inner vertex boundary of $Y_n$, and the volumes of balls of radius $d$ are uniformly bounded.

For the second inequality, we notice that every ball of radius $d$ contains at least one vertex of $p^{-1}(\{x_1\})$, and the volumes of the balls of radius $d$ around vertices of $p^{-1}(\{x_1\})$ are uniformly bounded. The inequality now follows by a double counting argument.

This finishes the proof. 
\end{proof}

\begin{proof}[Proof of Theorem \ref{thm:CoverNotControllable}]
For $M\subseteq X_2$ we write $1_M\from X_2\to\R$ with $1_M(x) = 1$ for $x\in M$ and $1_M(x)=0$ otherwise for the characteristic function of $M$, and for $x\in X_2$ we abbreviate $1_x:=1_{\{x\}}$.

For finite graphs such as $(X_1,b_1,m_1)$, it is well known that non-$0$-controllability of $(H_1,D_1)$ is equivalent to the existence of an eigenfunction of $H_1$ vanishing on $D_1$ (this is a verison of the so-called Hautus test, cf.\ \cite[Lemma 3.3.7]{Sontag-98}).

Let $\phi$ be an eigenfunction of $H_1$ to some eigenvalue $\lambda$, vanishing on $D_1$. We assume without obstruction that $\|\phi\|_\infty = 1$ and that the supremum norm is attained at some $x \in X_1$.
Let $\epsilon>0$. As $(X_2,b_2,m_2)$ is amenable and by Lemma~\ref{lem:amenableCover} there exists a finite subset $Z \subseteq X_2$ such that
\[
b_2(Z, Z^c) \leq \epsilon m_2(Z \cap p^{-1}(\{x\})).
\]

We define $f: X_2 \to \R$
\[
f := (\phi \circ p)\cdot 1_{Z}.
\]
%
%
As $\abs{f} \leq 1_Z$ the the semigroup $(\euler^{-tH_2})_{t\geq 0}$ is positivity preserving, 
we obtain for all $y \in X_2$
\[
|\euler^{-tH_2} f(y)| \leq \euler^{-tH_2} \abs{f}(y) \leq \euler^{-tH_2} 1_{Z}(y).
\]
Note that $H_2$ is a non-negative self-adjoint operator and hence, for all $p\in [1,\infty]$ the semigroup $(\euler^{-tH_2})_{t\geq0}$ extrapolates to $\ell_p(X_2,m_2)$.
We observe that for $t\geq 0$ we have 
\[\langle 1_Z, \euler^{-tH_2} 1_{Z^c}\rangle_{\ell_1,\ell_\infty} = \langle \euler^{-tH_2} 1_Z, 1_{Z^c}\rangle_{\ell_1,\ell_\infty} = \sum_{y\in Z^c} \euler^{-tH_2} 1_{Z}(y) m(y).\]
Moreover,
\begin{align*}
\sum_{y\in Z^c}
 \euler^{-tH_2} 1_{Z}(y) m(y)
&= \sum_{y\in Z^c} \int_0^t -H_2 \euler^{-sH_2} 1_{Z}(y) \,\drm s \cdot m(y)
\\
&= \int_0^t \sum_{z \in X_2,y \in Z^c} b_2(y,z) \left(\euler^{-sH_2} 1_{Z}(z) - \euler^{-sH_2} 1_{Z}(y)\right) \,\drm s \\
&= \int_0^t \sum_{z \in Z,y \in Z^c} b_2(y,z) \left(\euler^{-sH_2} 1_{Z}(z) - \euler^{-sH_2} 1_{Z}(y)\right) \,\drm s 
\\&\leq t  b_2(Z,Z^c),
\end{align*}
where in the third equality we use that
$
b_2(y,z) \left(\euler^{-sH_2} 1_{Z}(z) - \euler^{-sH_2} 1_{Z}(y)\right) 
$
is antisymmetric in $y$ and $z$.
Thus for all $t\geq 0$,
\[
\sum_{y\in Z^c}|\euler^{-tH_2} f(y)|m(y) \leq \sum_{y\in Z^c}
 \euler^{-tH_2} 1_{Z}(y) m(y) \leq t  b_2(Z,Z^c).
\]
Since $1_Z = 1-1_{Z^c}$ we obtain with $D_2 := p^{-1}(D_1)$ that
\[
\sum_{y\in D_2\cap Z} |\euler^{-tH_2} f(y)|m_2(y) = \sum_{y\in D_2\cap Z} |\euler^{-tH_2}( \phi \circ p)(y) - \euler^{-tH_2} h(y)|m_2(y)
\]
with $h := (\phi \circ p)\cdot 1_{Z^c} \in \ell_\infty(X_2,m_2)$. Note that $\norm{h}_\infty \leq 1$ and thus $\abs{h}\leq 1_{Z^c}$.

We now show that $H_2 (\phi \circ p) = \lambda \phi \circ p$.
For $x_2\in X_2$ we have
\begin{align*}
    \lambda \phi(p(x_2)) & = (H_1 \phi) (p(x_2))\\
    & = \frac{1}{m_1(p(x_2))} \sum_{y_1\in X_1} b_1(p(x_2),y_1) (\phi(p(x_2)) - \phi(y_1)).
\end{align*}
Since $X_2$ is a covering graph of $X_1$, we thus observe
\begin{align*}
    \lambda \phi(p(x_2)) & = \frac{1}{m_2(x_2)} \sum_{y_2\in X_2} b_2(x_2,y_2) (\phi(p(x_2)) - \phi(p(y_2))) = H_2(\phi\circ p)(x_2).
\end{align*}
Thus,  
$H_2 (\phi \circ p) = \lambda \phi \circ p$. Hence, $t\mapsto \euler^{-\lambda t} \phi \circ p$ solves the heat equation for $H_2$.
By stochastic completeness, bounded solutions to the heat equation are unique
 implying
$\euler^{-tH_2} (\phi \circ p) = \euler^{-\lambda t} \phi \circ p$ for all $t\geq 0$ and thus, $\sum_{z\in D_2} |\euler^{-tH_2}( \phi \circ p)(z)| = 0$  as $\phi = 0$ on $D_1$.
We estimate the remaining term
\[
\sum_{y\in D_2\cap Z} |\euler^{-tH_2} h(y)| m_2(y) \leq \sum_{y\in D_2\cap Z} \euler^{-tH_2} 1_{Z^c}(y) m_2(y) \leq \langle 1_Z, \euler^{-tH_2} 1_{Z^c}\rangle_{\ell_1,\ell_\infty} \leq t  b_2(Z,Z^c).
\]
In summary, we obtain
\begin{align*}
\sum_{y\in D_2} \abs{\euler^{-tH_2} f(y)} m_2(y) 
& \leq \sum_{y\in D_2\cap Z} \abs{\euler^{-tH_2} f(y)} m_2(y) + \sum_{y\in D_2\cap Z^c} \abs{\euler^{-tH_2} f(y)} m_2(y) \\
& \leq t b_2(Z,Z^c) + \sum_{y\in Z^c} \abs{\euler^{-tH_2} f(y)} m_2(y) \\
& \leq 2tb(Z,Z^c)
 \leq 2 t \epsilon m_2(Z \cap p^{-1}(\{x\})).
\end{align*}
Moreover, we have $m_2(Z \cap p^{-1}(\{x\})) \leq \|f\|^2$ as $|f|=1$ on $Z \cap p^{-1}(\{x\})$.
As $|\euler^{-tH_2} f|\leq 1$, we get
\[
\|(\euler^{-tH_2} f)|_{D_2}\|_{\ell_2(D_2,m_2|_{D_2})}^2 =
\sum_{y\in D_2} \abs{\euler^{-tH_2} f(y)}^2 m_2(y) \leq \sum_{y\in D_2} \abs{\euler^{-tH_2} f(y)} m_2(y)\leq 2t\epsilon \|f\|^2.
\]
Let now, $T>0$, $r\in [1,\infty]$ and $K\geq 0$.
As the Laplacian $H_2$ is bounded,
\[
\|f\|^2 \leq \|\euler^{-T H_2} f\|^2 \exp(2T \|H_2\|).
\]
Thus, when we combine these two estimates, we obtain
\begin{align*}
    \norm{(\euler^{-tH_2} f)|_{D_2}}_{\ell_2(D_2, m_2|_{D_2})} & = \left(\norm{(\euler^{-tH_2} f)|_{D_2}}_{\ell_2(D_2, m_2|_{D_2})}^2\right)^{1/2} \\
    & \leq 2^{1/2} t^{1/2} \epsilon^{1/2} \exp(T\norm{H_2}) \norm{\euler^{-TH_2} f}.
\end{align*}
Thus, for the $L_r$-norms we observe
\[\norm{(\euler^{-(\cdot)H_2} f)|_{D_2}}_{L_r((0,T);\ell_2(D_2, m_2|_{D_2}))} \leq 2^{1/2} \left(\frac{2}{r+2}\right)^{1/r} T^{1/2+1/r} \epsilon^{1/2} \exp(T\norm{H_2}) \norm{\euler^{-TH_2} f}.\]
Here, $(2 / (r+2))^{1/r} = 1$ for $r=\infty$ which is consistent with taking the limit as $r\to\infty$.
%
Since $\epsilon>0$ is arbitrary and in view of Theorem \ref{thm:duality} we obtain that $(H_2,D_2)$ is not $(0,T,r,K)$-controllable.
\end{proof}

\begin{proof}[Proof of Theorem \ref{thm:non-null-controllability_Z}]
We observe that $\Z$ with the given $b$ is amenable and satisfies \eqref{eq:C}, \eqref{eq:B}, \eqref{eq:M}. Moreover, it is the covering graph of the connected finite 4-cycle $X_1:=\Z / 4\Z$ with $m_1:=1$, $b_1:=b|_{\{0,1,2,3\}^2}$ and $p:\Z\to \Z/4\Z$ given by $p(x):= x\mod 4$. Note that with $D_1 := \{0,2\}\subseteq \Z/4\Z$ we have $D=2\Z = p^{-1}(D_1)$.
As $\phi:\Z/4\Z\to \R$, 
\[\phi(x):=\begin{cases} 0 & x\in \{0,2\},\\
            1 & x=1,\\
            -1 & x=3
           \end{cases}\]
is an eigenfunction for $H_1$ vanishing on $D_1$, the linear control system $(H_1, D_1)$ is not $0$-controllable. Now, Theorem~\ref{thm:CoverNotControllable} yields the assertion for the control system $(H, D)$.
\end{proof}

\subsection{Proof of Theorem~\ref{thm:nessecary}}

The proof of Theorem~\ref{thm:nessecary} follows from the following two propositions. For $x \in X$ we denote by $\delta_x \in \ell^2 (X,m)$ the Dirac delta function given by 
\[
\delta_x (y) := 
    \begin{cases} 
    1/\sqrt{m(x)} & \text{if}\ y=x,\\ 
    0 & \text{if}\  y\not = x.
    \end{cases}
\]
Moreover, for $x \in X$ we use the notation $1_x = \sqrt{m(x)} \delta_x$ as above.

\begin{proposition}
\label{prop:lhs}
    For all $t \geq 0$ and all $x \in X$ we have
    \[
    \lVert S_t \delta_x \rVert \geq \euler^{- \Deg (x) t} .
    \]
\end{proposition}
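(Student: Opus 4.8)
The plan is to combine a pointwise differential inequality for $t\mapsto (S_t\delta_x)(x)$ with the Cauchy--Schwarz inequality. Observe first that $\delta_x$ is normalised, $\norm{\delta_x}=1$, and that
\[
\langle S_t\delta_x,\delta_x\rangle = \sum_{y\in X}(S_t\delta_x)(y)\,\overline{\delta_x(y)}\,m(y) = \sqrt{m(x)}\,(S_t\delta_x)(x).
\]
Since $H$ is self-adjoint, so is $S_t$, and Cauchy--Schwarz gives $\norm{S_t\delta_x}\geq\norm{S_t\delta_x}\,\norm{\delta_x}\geq\langle S_t\delta_x,\delta_x\rangle$. Hence it suffices to prove $(S_t\delta_x)(x)\geq\frac{1}{\sqrt{m(x)}}\,\euler^{-\Deg(x)t}$.

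The key structural input is positivity preservation of the semigroup. Since $\Dmax\,\id$ commutes with $H$, we may write $S_t=\euler^{-t\Dmax}\,\euler^{t(\Dmax\,\id-H)}$, and the operator $\Dmax\,\id-H$ acts by $(\Dmax\,\id-H)f(y)=(\Dmax-\Deg(y))f(y)+\frac{1}{m(y)}\sum_{z\in X}b(y,z)f(z)$ with all coefficients nonnegative; therefore $\euler^{t(\Dmax\,\id-H)}$, and thus $S_t$, maps nonnegative functions to nonnegative functions for $t\geq0$ (this is of course standard for graph Laplacians, cf.\ \cite{KellerLW-21}). In particular $v(t):=S_t\delta_x\geq0$ pointwise for all $t\geq0$. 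Because $H$ is bounded, $t\mapsto v(t)$ is differentiable in $\ell_2(X,m)$ with $\dot v(t)=-Hv(t)$; point evaluation being a bounded functional on $\ell_2(X,m)$, the scalar function $t\mapsto v(t)(x)$ is differentiable and
\[
\frac{\drm}{\drm t}v(t)(x)=-(Hv(t))(x)=-\Deg(x)\,v(t)(x)+\frac{1}{m(x)}\sum_{y\in X}b(x,y)\,v(t)(y)\geq-\Deg(x)\,v(t)(x),
\]
where the inequality uses $b\geq0$ and $v(t)\geq0$.

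It then follows that $t\mapsto\euler^{\Deg(x)t}v(t)(x)$ has nonnegative derivative, hence is nondecreasing, so $v(t)(x)\geq\euler^{-\Deg(x)t}v(0)(x)=\frac{1}{\sqrt{m(x)}}\,\euler^{-\Deg(x)t}$. Combining with the first paragraph, $\norm{S_t\delta_x}\geq\langle S_t\delta_x,\delta_x\rangle=\sqrt{m(x)}\,v(t)(x)\geq\euler^{-\Deg(x)t}$, which is the assertion.

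I do not expect a genuine obstacle here; the only points deserving a word of care are the positivity preservation of $(S_t)_{t\geq0}$ (immediate from the sign pattern of $\Dmax\,\id-H$ together with boundedness of $H$) and the passage from $\ell_2$-differentiability of $t\mapsto S_t\delta_x$ to differentiability of $t\mapsto(S_t\delta_x)(x)$, which is automatic because point evaluation is continuous on $\ell_2(X,m)$.
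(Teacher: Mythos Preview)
Your proof is correct. The route, however, is genuinely different from the paper's. The paper first uses the semigroup property and self-adjointness to write
\[
\norm{S_t\delta_x}^2=\langle S_{2t}\delta_x,\delta_x\rangle=(S_{2t}1_x)(x),
\]
and then invokes the probabilistic description of $(S_t)_{t\geq0}$ via the associated continuous-time Markov process $(Z_t)_{t\geq0}$: one has $(S_{2t}1_x)(x)=\PP_x(Z_{2t}=x)\geq\PP_x(H_1>2t)=\euler^{-2\Deg(x)t}$, where $H_1$ is the first holding time. Your argument replaces the Markov-process machinery by a direct analytic Gronwall bound: from positivity preservation and the explicit form of $-H$ you obtain the pointwise differential inequality $\frac{\drm}{\drm t}(S_t\delta_x)(x)\geq-\Deg(x)(S_t\delta_x)(x)$, which integrates to the same diagonal lower bound $(S_t1_x)(x)\geq\euler^{-\Deg(x)t}$. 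In effect both proofs capture the same phenomenon---``the process has not yet left $x$''---but yours does so without introducing the stochastic interpretation, at the modest cost of isolating positivity preservation by hand (your $\Dmax\,\id-H$ trick is the standard one and is fine here since $H$ is bounded). A further cosmetic difference is that the paper uses the equality $\norm{S_t\delta_x}^2=(S_{2t}1_x)(x)$ while you use the Cauchy--Schwarz inequality $\norm{S_t\delta_x}\geq(S_t1_x)(x)$; both yield exactly the same final bound.
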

\begin{proposition}
\label{prop:rhs}
    Let $D \subseteq X$ be not $d_{\comb}$-relatively dense.
    Then there exists a sequence $(x_n)_{n \in \N}$ in $X$ such that for all $T>0$ and all $r\in [1,\infty]$ we have 
    \[
    \lim_{n \to \infty}  \norm{(S_{(\cdot)} \delta_{x_n})|_D}_{L_r((0,T);\ell_2(D,m|_D))} = 0 .
    \]
\end{proposition}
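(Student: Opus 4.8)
The plan is to exploit the approximate finite speed of propagation of $S_t = \euler^{-tH}$, which comes from $H$ being a bounded operator that couples only neighbouring vertices; in particular the assumption \eqref{eq:B} that $\Dmax<\infty$ is what makes the argument work.

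First I would fix the sequence. Since $D$ is not $d_{\comb}$-relatively dense we have $\Covr^{d_{\comb}}(D) = \infty$, so for every $n\in\N$ the set $\bigcup_{y\in D} B_n^{d_{\comb}}(y)$ is a proper subset of $X$; I would pick $x_n$ in its complement, so that $d_{\comb}(x_n,y) > n$ for all $y\in D$, i.e.\ $B_n^{d_{\comb}}(x_n)\cap D = \emptyset$. (The case $D=\emptyset$ is trivial, as then $\ell_2(D,m|_D) = \{0\}$.) This $(x_n)_{n\in\N}$ will be the claimed sequence; note it does not depend on $T$ or $r$.

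Next I would record the locality of $H$: a straightforward induction on $k$, using the definition of $H$ and $b(x,x)=0$, shows that $(H^k\delta_x)(z) = 0$ whenever $d_{\comb}(x,z) > k$, i.e.\ $H^k\delta_x$ is supported in $B_k^{d_{\comb}}(x)$. Combining this with $\norm{H}<\infty$ (by \eqref{eq:B}) and $\norm{\delta_x}=1$, I would split
\[
S_t\delta_{x_n} = \sum_{k=0}^{n}\frac{(-t)^k}{k!}\,H^k\delta_{x_n} + R_n(t), \qquad R_n(t) := \sum_{k>n}\frac{(-t)^k}{k!}\,H^k\delta_{x_n}.
\]
The finite sum is supported in $B_n^{d_{\comb}}(x_n)$, hence vanishes on $D$, so $(S_t\delta_{x_n})|_D = R_n(t)|_D$, and therefore for all $t\in[0,T]$
\[
\norm{(S_t\delta_{x_n})|_D}_{\ell_2(D,m|_D)} \leq \norm{R_n(t)}_{\ell_2(X,m)} \leq \sum_{k>n}\frac{(t\norm{H})^k}{k!} \leq \sum_{k>n}\frac{(T\norm{H})^k}{k!} =: \epsilon_n ,
\]
using monotonicity in $t$ in the last step. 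Since $\sum_{k\geq 0}(T\norm{H})^k/k! = \euler^{T\norm{H}}<\infty$, the tails obey $\epsilon_n\to 0$ as $n\to\infty$.

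Finally I would pass to the time-$L_r$ norm: for $r\in[1,\infty)$ one has
\[
\norm{(S_{(\cdot)}\delta_{x_n})|_D}_{L_r((0,T);\ell_2(D,m|_D))} \leq T^{1/r}\sup_{t\in[0,T]}\norm{(S_t\delta_{x_n})|_D}_{\ell_2(D,m|_D)} \leq T^{1/r}\epsilon_n ,
\]
and for $r=\infty$ the bound is $\epsilon_n$; in either case the right-hand side tends to $0$ as $n\to\infty$, which is the assertion. I do not expect a genuine obstacle here: the only points needing (routine) care are the locality induction $(H^k\delta_x)(z)=0$ for $d_{\comb}(x,z)>k$ and the uniform-in-$t$ bound on $R_n(t)$, both immediate once $H$ is known to be bounded. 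Phrasing the estimate through the vector-valued remainder $R_n(t)$ rather than through individual matrix entries $\langle\delta_z,S_t\delta_{x_n}\rangle$ is convenient, since the trivial inequality $\norm{R_n(t)|_D}\leq\norm{R_n(t)}$ absorbs any summability issue over the possibly infinite set $D$.
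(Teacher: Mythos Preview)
Your proof is correct and takes a genuinely different route from the paper. The paper argues probabilistically: it writes $(S_t f)(x)=\EE_x(f(Z_t))$ for the associated continuous-time Markov process $(Z_t)_{t\ge 0}$, reduces $\lVert(S_t\delta_{x_n})|_D\rVert^2$ to $\PP_{x_n}(Z_t\notin B_{n-1}(x_n))$, bounds this by the probability of at least $n$ jumps before time $t$, and evaluates the latter via the Erlang distribution as $\euler^{-\Dmax t}\sum_{i\ge n}(\Dmax t)^i/i!$. You instead use the Taylor expansion of $\euler^{-tH}$ together with the combinatorial locality $H^k\delta_x\equiv 0$ outside $B_k^{d_{\comb}}(x)$, so that the restriction to $D$ sees only the tail $R_n(t)$, which you control by the operator-norm tail $\sum_{k>n}(T\lVert H\rVert)^k/k!$. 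Your argument is more elementary---it avoids the stochastic machinery entirely and needs only boundedness of $H$---while the paper's version yields the slightly sharper constant $\Dmax$ in place of $\lVert H\rVert$ (and an extra factor $\euler^{-\Dmax t}$), which is irrelevant for the stated limit but fits naturally with the Markov-process framework already set up for the companion Proposition~\ref{prop:lhs}.
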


For the proofs of Propositions~\ref{prop:lhs} and \ref{prop:rhs} we will employ the stochastic interpretation of the heat semigroup.
To this end, we denote by $(Z_t)_{t \geq 0}$ the (continuous time) Markov process associated to the semigroup $(S_t)_{t \geq 0}$, which can be constructed as follows, see, e.g., \cite[Section 2.5]{KellerLW-21} for details. 
Over a probability space $(\Omega , \mathcal{A} , \PP)$ we define a (discrete time) Markov process $Y=(Y_n)_{n \in \N_0}$ in $X$ with an initial distribution and transition probabilities
\begin{equation} \label{eq:transition}
 \PP (Y_1 = y_1 \mid Y_0 = y_0) = \frac{b(y_0,y_1)}{m(y_0) \Deg (y_0)}, \quad y_0,y_1 \in X ,
\end{equation}
as well as a sequence $(\xi_n)_{n \in \N}$ of independent exponentially distributed random variables with parameter 1 which is independent of $Y$ as well. Note that by assumption we have $b(x,x) = 0$ for all $x \in X$. In order to define the Markov process $Z = (Z_t)_{t \geq 0}$ 
we define the sequence of \emph{holding times} $(H_k)_{k \in \N}$ by $H_k := \xi_k / \Deg(Y_{k-1})$, and the sequence of \emph{jump times} $(J_k)_{k \in \N_0}$ by $J_0 := 0$ and $J_k := H_1 + H_2 + \ldots + H_k$ for $k \in \N$. The continuous time Markov process $Z = (Z_t)_{t \geq 0}$ is defined by $Z_t := Y_k$ if $t \in [J_k , J_{k+1})$. Roughly speaking, the Markov process $X$ stays at a vertex for an exponential distributed holding time, and then jumps at the jump time to another vertex according to the law \eqref{eq:transition}.

For $x \in X$ we denote by $\PP_x$ the probability measure on $\mathcal{A}$ given by $\PP_x (\cdot) = \PP (\cdot \mid X_0 = x)$, and by $\EE_x$ the expectation with respect to $\PP_x$. Then, \cite[Theorem 2.31]{KellerLW-21} implies for all $t\geq0$, $f \in \ell^2 (X,m)$ and $x\in X$
\begin{equation} \label{eq:FK}
 (S_t f)(x) = \EE_x (f (Z_t) ) .
\end{equation}
Note that in \cite[Theorem~3.31]{KellerLW-21} the expectation on the right hand side of \eqref{eq:FK} includes the additional characteristic function $\chi_{\{t < \zeta\}}$, where $\zeta$ is the so-called lifetime or explosion time of the process. However, in our situation with Assumption \eqref{eq:B} we have that $(X,b,m)$ is stochastically complete by \cite[Theorem 4.3]{Wojciechowski-21} and hence $\zeta = \infty$ with probability one by \cite[Theorem 7.32]{KellerLW-21}.

\begin{proof}[Proof of Proposition~\ref{prop:lhs}]
Fix $t \geq 0$ and $x \in X$. Then we have
\begin{equation} \label{eq:lower1}
    \norm{S_t \delta_x}^2  = \langle S_t \delta_x, S_t\delta_x\rangle = \langle S_{2t} \delta_x, \delta_x\rangle
    = \sum_{y\in X} (S_{2t}\delta_x) (y) \delta_x(y) m(y) = (S_{2t}1_x)(x) ,
\end{equation}
where we used that $S_t$ is a self-adjoint. By \eqref{eq:FK} we find 
\begin{equation} \label{eq:lower2}
 (S_{2t} 1_x)(x) =  \EE_x ( 1_x (Z_{2t}) ) 
    = \PP_x (X_{2t} = x) \geq \PP_x (H_1 > 2t) .
\end{equation}
Recall that $H_1 = \xi_1 / \Deg (Y_0) = \xi_1 / \Deg (Z_0)$, where $\xi_1$ is exponentially distributed with parameter 1 and independent of $Y_0 = Z_0$. Hence, we have
\begin{equation} \label{eq:lower3}
   \PP_x (H_1 > 2t) = \PP (\xi_1 > 2 \Deg (Z_0) t  \mid Z_0 = x) = \PP (\xi_1 > 2 \Deg (x) t ) = \euler^{- 2 \Deg (x) t} . 
\end{equation}
The claim now follows from \eqref{eq:lower1}, \eqref{eq:lower2}, and \eqref{eq:lower3}.
\end{proof}
\begin{proof}[Proof of Proposition~\ref{prop:rhs}]
First we explain the construction of the sequence $(x_n)_{n \in \N}$ in $X$.  Since $D$ is not $d_\comb$-relatively dense, we have
$\Covr (D) = \infty$.
With other words, for all $R > 0$ we have (with respect to the metric $d_\comb$)
\[
 \bigcup_{x\in D} B_R (x) \not =  X .
\]
For each $n \in \N$ we choose $x_n \in X$ such that
$d_{\comb} (x_n , D) = \inf_{x \in D} d_{\comb}(x_n,x) \geq n$.
\par
Note that $0\leq S_t 1_x\leq 1$ for all $t\geq 0$ and $x\in X$ by \eqref{eq:FK}.
Since $ ( S_t 1_{x_n})(y) m(y) =  ( S_t 1_{y})(x_n) m (x_n)$, we have for all $n \in \N$
\begin{equation} \label{eq:NormD}
\lVert (S_t \delta_{x_n})|_D \rVert^2  
    = \sum_{y \in D} ( S_t 1_{x_n})(y)^2 \frac{m(y)}{m(x_n)} 
    \leq \sum_{y \in D} ( S_t 1_{x_n})(y) \frac{m(y)}{m(x_n)}
    =  \sum_{y \in D} ( S_t 1_{y})(x_n).
\end{equation}
For the last sum we use \eqref{eq:FK} and calculate
\begin{align}
\sum_{y \in D} ( S_t 1_{y})(x_n)   = \sum_{y \in D} \EE_{x_n} (1_y (Z_t)) 
&\leq  \sum_{y \not \in B_{n-1}(x_n)} \EE_{x_n} (1_y (Z_t)) \nonumber \\ 
&= \sum_{y \not \in B_{n-1}(x_n)} \PP_{x_n} (Z_t = y) = \PP_{x_n} (Z_t \not \in B_{n-1} (x_n)) . \label{eq:SumD}
\end{align}
The last probability can be estimated from above by the probability that the Markov process does at least $n$ jumps within the time interval $[0,t]$, or with other words, that the sum of the first $n$ holding times is less or equal to $t$, i.e. for all $n \in \N$ we have
\[
 \PP_{x_n} (Z_t \not \in B_{n-1} (x_n))
 \leq 
 \PP_{x_n} \left( \sum_{k=1}^n H_k \leq t \right) .
\]
For $x \in X$ and $n \in \N$ we denote by 
\[
 \Gamma (x,n) := \left\{\gamma = (\gamma_0 , \gamma_1 , \ldots , \gamma_n) :\; \gamma \,\text{path in $X$},\, \gamma_0 = x\right\}
\]
the set of all paths of length $n$ starting in $x$ with the convention that $\Gamma(x,0) = \{x\}$, and by $\{Y_{[0,n]} = \gamma\}$ the event $\{Y_0=\gamma_0,Y_1=\gamma_1,\ldots , Y_n=\gamma_n\}$. Then, by the law of total probability, we have
\begin{align*}
 \PP_{x_n} \left( \sum_{k=1}^n H_k \leq t \right)
 &= \sum_{\gamma \in \Gamma (x_n , n-1)} \!\!\!\!\! \PP_{x_n} \left( \sum_{k=1}^n \xi_k \leq \Deg(Y_{k-1}) t \mid Y_{[0,n-1]}=\gamma \right) \PP_{x_n} \left(Y_{[0,n-1]}=\gamma \right) \\
 &= \sum_{\gamma \in \Gamma (x_n , n-1)} \!\!\!\!\! \PP_{x_n} \left( \sum_{k=1}^n \xi_k \leq \Deg (\gamma_{k-1})t \mid Y_{[0,n-1]}=\gamma \right) \PP_{x_n} \left(Y_{[0,n-1]}=\gamma \right) .
\end{align*}
Since $(\xi_k)_{k \in \N}$ is independent of $Y$, and since $\Dmax = \sup_{x \in X} \Deg (x) < \infty$ by Assumption \eqref{eq:B}, we find
\[
 \PP_{x_n} \left( \sum_{k=1}^n H_k \leq t \right) 
 \leq
 \sum_{\gamma \in \Gamma (x_n , n-1)} \!\!\!\!\! \PP_{x_n} \left( \sum_{k=1}^n \xi_k \leq \Dmax t \right) \PP_{x_n} \left(Y_{[0,n-1]}=\gamma \right) .
\]
As the sum of $n$ independent exponentially distristributed random variables with parameter $1$ is Erlang distributed with parameters $n$ and $1$, we conclude
\[
 \PP_{x_n} \Bigl( \sum_{k=1}^n H_k \leq t \Bigr) 
 \leq
 \mathrm {e}^{-\Dmax t}\sum _{i=n}^{\infty}{\frac {(\Dmax t)^{i}}{i!}} \!\!\!\!\! \sum_{\gamma \in \Gamma (x_n , n-1)} \!\!\!\!\! \PP_{x_n} \left(Y_{[0,n-1]}=\gamma \right) 
 = \mathrm {e}^{-\Dmax t}\sum _{i=n}^{\infty}{\frac {(\Dmax t)^{i}}{i!}} ,
\]
and hence
\begin{align}
 \PP_{x_n} (Z_t \not \in B_{n-1} (x_n))
 &\leq \mathrm {e}^{-\Dmax t}\sum _{i=n}^{\infty}{\frac {(\Dmax t)^{i}}{i!}} . \label{eq:ProbD}
\end{align}
For all $n \in \N$ we conclude from \eqref{eq:NormD}, \eqref{eq:SumD} and \eqref{eq:ProbD} that
\[
\lVert (S_t \delta_{x_n})|_D \rVert^2   \leq \mathrm {e}^{-\Dmax t}\sum _{i=n}^{\infty}{\frac {(\Dmax t)^{i}}{i!}} .
\]
Thus,
\[\norm{(S_t \delta_{x_n})|_D} \leq \mathrm {e}^{-\frac{1}{2}\Dmax t}\left(\sum _{i=n}^{\infty}{\frac {(\Dmax t)^{i}}{i!}}\right)^{1/2}.\]
Now, let $T>0$ and $r\in [1,\infty]$. Then for the supremum of the right-hand side with respect to $t\in [0,T]$ we estimate
\[\sup_{t\in [0,T]} \mathrm {e}^{-\frac{1}{2}\Dmax t}\left(\sum _{i=n}^{\infty}{\frac {(\Dmax t)^{i}}{i!}}\right)^{1/2} = \left(\sum _{i=n}^{\infty}{\frac {(\Dmax T)^{i}}{i!}}\right)^{1/2} \to 0\]
as $n\to \infty$. Thus, also
\[
\norm{(S_{(\cdot)} \delta_{x_n})|_D}_{L_r((0,T);\ell_2(D,m|_D))} \to 0. \qedhere
\]
\end{proof}

\begin{proof}[Proof of Theorem \ref{thm:nessecary}]
We proof the theorem by contraposition. Assume that $D \subseteq X$ is not $d_\comb$-relatively dense. Let $T>0$  and $r\in [1,\infty]$, and let $r'\in [1,\infty]$ be the H\"older conjugate of $r$.
By Proposition~\ref{prop:lhs} and Assumption~\eqref{eq:B} we have for all $x \in X$
\[
    \lVert S_T\delta_x \rVert \geq \euler^{-\Deg (x) T} \geq \euler^{- \Dmax T}.
\]
By Proposition~\ref{prop:rhs} there exists a sequence $(x_n)_{n \in \N}$ in $X$ such that
\[
    \lim_{n \to \infty} \norm{(S_{(\cdot)} \delta_{x_n})|_D}_{L_{r'}((0,T);\ell_2(D,m|_D))} = 0 .
\]
Thus, for all $\alpha \in [0,\euler^{-\Dmax T})$ and all $K \geq 0$ there exists $n \in \N$ such that 
\[
    \lVert S_T\delta_{x_n} \rVert  \geq \euler^{-\Dmax T}  > K \norm{(S_{(\cdot)} \delta_{x_n})|_D}_{L_{r'}((0,T);\ell_2(D,m|_D))} + \alpha \lVert \delta_{x_n} \rVert,
\]
since $\norm{\delta_{x}} = 1$ for all $x\in X$.
Theorem~\ref{thm:duality} implies that for all $\alpha \in [0,\euler^{-\Dmax T})$ and all $K \geq 0$, the linear control problem \eqref{eq:system} is not $(\alpha , T , r , K)$-controllable.
\end{proof}

\printbibliography

\end{document}